\newtheorem*{Theorem}{Theorem~\ref{main.thm}}
\newtheorem{theorem}{Theorem}[section]
\newtheorem{proposition}[theorem]{Proposition}
\newtheorem{lemma}[theorem]{Lemma}
\newtheorem{corollary}[theorem]{Corollary}
\theoremstyle{definition}
\theoremstyle{remark}
\theoremstyle{remark}
\newtheorem{remark}[theorem]{Remark}
\def\({{\rm (}}
\def\){{\rm )}}
\let\Mathrm\operator@font
\let\Cal\mathcal
\let\Bbb\mathbb
\newcommand{\fm}{\ensuremath{\mathfrak m}}
\def\standop#1{\mathop{\Mathrm #1}\nolimits}
\def\difstop#1#2{\expandafter\def\csname #1\endcsname{\standop{#2}}}
\def\defstop#1{\difstop{#1}{#1}}
\def\GL{\text{\sl{GL}}}
\def\id{\mathord{\Mathrm{id}}}
\def\Id{\mathord{\Mathrm{Id}}}
\def\red{_{\Mathrm{red}}}
\def\SL{\text{\sl{SL}}}
\def\eHK{e_{\mathrm{HK}}}
\def\aq{/\!\!\hspace{.2ex}/}
\def\O{\Cal O}
\def\fm{\mathfrak{m}}
\def\QQ{{\mathbb{Q}}}
\def\RR{{\mathbb{R}}}
\def\ZZ{{\mathbb{Z}}}
\def\sdarrow#1{\downarrow\hbox to 0pt{\scriptsize$#1$\hss}}
\def\suarrow#1{\uparrow\hbox to 0pt{\scriptsize$#1$\hss}}
\def\ssearrow#1{\searrow\hbox to 0pt{\scriptsize$#1$\hss}}
\def\norm#1{\lVert #1\rVert}
\def\mt#1{$#1$}
\def\section{\@startsection{section}{1}{\z@ }%
  {-3.5ex plus -1ex minus -.2ex}{2.3ex plus .2ex}{\bf }}
\long\def\refname{\par\kern -3ex
  \begin{center}\rm R\sc{eferences}\end{center}\par\kern 
  -2ex}
\def\@seccntformat#1{\csname the#1\endcsname.\quad}
\def\@@@sect#1#2#3#4#5#6[#7]#8{%
  \ifnum #2>\c@secnumdepth 
  \def \@svsec {}\else \refstepcounter {#1}%
  \def\@svsec{}
  \fi 
  \@tempskipa #5\relax 
  \ifdim \@tempskipa >\z@ 
  \begingroup #6\relax \@hangfrom {\hskip #3\relax 
    \@svsec}{\interlinepenalty \@M #8\par }\endgroup 
  \csname #1mark\endcsname {#7}
  \else 
  \def \@svsechd {#6\hskip #3\@svsec #8\csname #1mark\endcsname {#7}}
  \fi \@xsect {#5}}
\def\@@@startsection#1#2#3#4#5#6{%
  \if@noskipsec \leavevmode \fi \par \@tempskipa #4\relax \@afterindenttrue 
  \ifdim \@tempskipa <\z@ \@tempskipa -\@tempskipa \@afterindentfalse 
  \fi \if@nobreak \everypar {}\else \addpenalty {\@secpenalty }\addvspace 
  {\@tempskipa }\fi \@ifstar {\@ssect {#3}{#4}{#5}{#6}}{\@dblarg 
    {\@@@sect {#1}{#2}{#3}{#4}{#5}{#6}}}}
\def\theparagraph{\thesection.\arabic{paragraph}}
\def\aparagraph{\@@@startsection{paragraph}{2}{\z@ }%
  {1.75ex plus .2ex minus .15ex}{-1em}{\bf(\theparagraph) } }
\def\paragraph{\@@@startsection{paragraph}{2}{\z@ }%
  {1.75ex plus .2ex minus .15ex}{-1em}{}{\bf(\theparagraph)} }
\let\c@theorem\c@paragraph
\title{Generalized $F$-signatures of the rings of invariants of finite group schemes}
\author{{\sc M}itsuyasu {\sc H}ashimoto\thanks{Partially supported by JSPS KAKENHI Grant number 20K03538
    and MEXT Promotion of Distinctive Joint Research Center Program JPMXP0619217849.}
  \and
      {\sc F}umiya {\sc K}obayashi}
\date{}
\begin{document}

\maketitle
\footnote[0]
{2020 \textit{Mathematics Subject Classification}. 
  Primary 13A50; Secondary 13A35.
  Key Words and Phrases.
  $F$-signature, invariant subring, finite group scheme, small action.
}

\begin{abstract}
  Let $k$ be a perfect field of prime characteristic $p$,
  $G$ a finite group scheme over $k$, and $V$ a finite-dimensional $G$-module.
  Let $S=\mathop{\mathrm{Sym}}V$ be the symmetric algebra with the standard grading.
  Let $M$ be a $\Bbb Q$-graded $S$-finite $S$-free $(G,S)$-module, and
  $L$ be its $S$-reflexive graded $(G,S)$-submodule.
  Assume that the action of $G$ on $V$ is small in the sense that there exists some
  $G$-stable Zariski closed subset $F$ of $V$ of codimension two or more such that
  the action of $G$ on $V\setminus F$ is free. Generalizing the result of P.~Symonds
  and the first author, we describe the Frobenius limit $\mathop{\mathrm{FL}}(L^G)$
  of the $S^G$-module $L^G$.
  In particular, we determine the generalized $F$-signature $s(M,S^G)$ for each indecomposable
  gradable reflexive $S^G$-module $M$.
  In particular, we prove the fact that the $F$-signature $s(S^G)=s(S^G,S^G)$ equals $1/\dim k[G]$ if $G$ is linearly
  reductive (already proved by Watanabe--Yoshida, Carvajal-Rojas--Schwede--Tucker, and Carvajal-Rojas)
  and $0$ otherwise (some important cases has already
  been proved by Broer, Yasuda, Liedtke--Martin--Matsumoto).
\end{abstract}

\section{Introduction}

Study of Frobenius maps and their iterations has long been important in
studying Noetherian commutative rings of prime characteristic $p$ \cite{Huneke}.

For simplicity, let $p$ be a prime number, $k$ a perfect field of characteristic $p$,
and $A$ be a complete Noetherian local ring of characteristic $p$ whose residue
field is $k$ (in the graded case, let $A=\bigoplus_{n\geq 0}A_n$ be a finitely
generated positively graded $k$-algebra with $A_0=k$) in this introduction.

Huneke and Leuschke \cite{HL} defined the $F$-signature $s(A)$ of $A$ using the
iteration $F_A^e:A\rightarrow {}^eA$ of the Frobenius map, where ${}^eA=A$.
We can decompose ${}^eA=A^{a_e}\oplus M_e$ as an $A$-module, where $M_e$ does not have a free summand,
and $s(A)$ is defined to be $\lim_{e\rightarrow \infty}a_e/p^{de}$, where $d=\dim A$.
Tucker \cite{Tucker} proved that the limit exists and $s(A)$ is well-defined.
As Yao \cite{Yao} pointed out, the $F$-signature agrees with the minimal relative
Hilbert--Kunz multiplicity defined by Watanabe and Yoshida \cite{WY}.
$A$ is regular if and only if $s(A)=1$ \cite{HL}, \cite{WY}.
Moreover, $s(A)>0$ if and only if $A$ is strongly $F$-regular \cite{AL}.

Let $G$ be a finite group and $V$ a $d$-dimensional representation of $G$ over $k$.
Let $S=\Sym V$ be the symmetric algebra, and $A=S^G$.
We say that the action of $G$ on $V$ is small if $G\rightarrow \GL(V)$ is
injective, and its image does not have a pseudo-reflection.
Watanabe and Yoshida proved that if the action is small and $G$ is linearly reductive
(or equivalently, $p$ does not divide the order $|G|$ of $G$), then
$s(A)=1/|G|$.
From the result of Broer \cite{Broer} and Yasuda \cite{Yasuda}, $s(A)=0$ if
$G$ is small but not linearly reductive.

It is natural to ask the asymptotic behavior of other indecomposable summands than the free one of ${}^eA$.
The first author and Yusuke Nakajima \cite{HN}
named it the generalized $F$-signature, and calculated them for the invariant subring $A$
for the case that $G$ is small and linearly reductive \cite{HN}.
After that, the first author and P.~Symonds \cite{HS} defined the
Frobenius limit $\FL([A])=\lim_{e\rightarrow\infty}[{}^eA]/p^{de}$
of $[A]$, where the limit is taken in certain normed space whose $\RR$-basis is the set of isomorphism classes of
indecomposable $\QQ$-graded $A$-modules (up to shiftings), see (\ref{normed.par}).
They proved that
\[
\FL([A])=\frac{1}{|G|}[S]=\frac{1}{|G|}\sum_{i=1}^r\frac{\dim_k V_i}{\dim_k \End_G V_i}[M_i],
\]
where $V_1,\ldots,V_r$ is the complete set of representatives of the isomorphism classes of
simple $G$-modules, $P_i$ the projective cover of $V_i$, and $M_i=(P_i\otimes_k S)^G$ \cite[Theorem~5.1]{HS}.
This information is enough to deduce that the generalized $F$-signature
$s(M_i,A)=\lim_{e\rightarrow\infty}c_{i,e}/p^{de}$ agrees with $\frac{\dim_k V_i}{|G|\dim_k \End_G V_i}$,
where ${}^eA=M_i^{c_{i,e}}\oplus N_{i,e}$ such that $N_{i,e}$ does not have $M_i$ as a direct summand.
If $k$ is algebraically closed, then we simply have that $s(M_i,A)=(\dim_k V_i)/|G|$.
\iffalse
Although $\FL[A]$ is a linear combination of $[M_1],\ldots,[M_r]$, it does not mean that
$A$ is of finite $F$-representation type (FFRT).
Indeed, A.~Singh and the first author recently found an example of a small action of a
finite group $G$ on $V$ such that $A=(\Sym V)^G$ is not of FFRT.
\fi

The purpose of this paper is to extend these results on the (generalized) $F$-signature and
the Frobenius limit to the case that $G$ is a finite group scheme, rather than a constant
finite group.
We say that the action of $G$ on $V$ is small if there exists some closed subset $F$ of
$V/\!/G=\Spec k[V]^G$ of codimension two or more such that $\pi: V\setminus \pi^{-1}(F)
\rightarrow V/\!/G\setminus F$ is a principal $G$-bundle ($G$-torsor), where
$\pi:V\rightarrow V/\!/G$ is the canonical map.
Note that the definition of the smallness is the natural generalization of the definition
for the constant group $G$ (that is, a faithful action without pseudo-reflection).
Our main theorem is the following.

\begin{Theorem}
  Let $k$ be a perfect field of characteristic $p>0$,
  $G$ be a finite $k$-group scheme over $k$,
  and $V$ a finite-dimensional $G$-module.
  Let $S=\Sym V$ be the symmetric algebra of $V$, and we assume that $S$ is graded so that
  each element of $V$ is homogeneous of degree one.
  Assume that the action of $G$ on $S$ is small.
  %, and the identity component $G^\circ$ of
  %$G$ is linearly reductive.
  Let $M$ be a $\Bbb Q$-graded $S$-finite $S$-free $(G,S)$-module,
  and $L$ be its graded $(G,S)$-submodule which is reflexive as an $S$-module.
  Let $k=V_1,\ldots,V_r$ be the simple $G$-modules, and let $P_i$ be the projective cover of $V_i$.
  Then we have
\[
\FL(L^G)=\frac{\rank_S L}{\dim_k k[G]}[S'']=
\frac{\rank_S L}{\dim_k k[G]}
\sum_{i=1}^r \frac{\dim V_i}{\dim \End_G V_i} [(P_i\otimes_k S)^G],
\]
where $S''=(S\otimes_kk[G])^G$ is $S$ viewed as an $A$-module, where $A=S^G$.
\end{Theorem}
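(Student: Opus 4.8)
The plan is to separate the two asserted equalities. The second one,
\[
[S'']=\sum_{i=1}^r\frac{\dim V_i}{\dim\End_G V_i}[(P_i\otimes_k S)^G],
\]
is pure representation theory: it follows from the canonical isomorphism $(S\otimes_k k[G])^G\cong S$ of $A$-modules together with the decomposition of the regular representation, $[k[G]]=\sum_i(\dim V_i/\dim\End_G V_i)[P_i]$ into projective covers (the standard decomposition of the left regular module of the finite-dimensional algebra dual to $k[G]$), and the flatness of $S\otimes_k(-)$. So the real content is the first equality, which I would rewrite, using $S''\cong S$ as $A$-modules, as $\FL(L^G)=\frac{\rank_S L}{\dim_k k[G]}[S]$. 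The starting point is that taking $G$-invariants commutes with Frobenius, since $k$ is perfect, $G$ is $k$-linear, and the ideal $\fm^{[p^e]}$ is $G$-stable; hence ${}^e(L^G)=({}^e L)^G$ and $\FL(L^G)=\lim_{e\to\infty}p^{-de}\,[({}^e L)^G]$.

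First I would describe ${}^e L$ as a $(G,S)$-module through its Frobenius fibre. Since $S$ is a free graded module over $S^{[p^e]}=k[x_1^{p^e},\dots,x_d^{p^e}]$ and $\fm^{[p^e]}$ is $G$-stable, the degree filtration produces a $G$-stable, $S^{[p^e]}$-stable filtration of ${}^e L$ whose associated graded is, as a $(G,S)$-module via Frobenius, $S$-free with space of generators the finite-dimensional $G$-module $L/\fm^{[p^e]}L$. Because $\FL$ is additive on the associated graded of such filtrations (the failure of exactness of $(-)^G$ contributing only lower-order terms in the limit), this reduces the problem to
\[
\FL(L^G)=\lim_{e\to\infty}\frac{1}{p^{de}}\,[(S\otimes_k(L/\fm^{[p^e]}L))^G],
\]
so that everything is governed by the classes $[(S\otimes_k W)^G]$ for $G$-modules $W$, whose pieces surviving in the limit are exactly the $[(P_i\otimes_k S)^G]$.

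The heart of the argument is the asymptotic equidistribution statement
\[
\lim_{e\to\infty}\frac{1}{p^{de}}\,[L/\fm^{[p^e]}L]=\frac{\rank_S L}{\dim_k k[G]}\,[k[G]]
\]
in the normalized representation ring of $G$: per unit dimension, the Frobenius fibres of $L$ converge to a multiple of the regular representation. This is where smallness is used. Over $U=(V\dq G)\setminus F$ the map $\pi$ is a $G$-torsor, so by descent $L$ restricted to $\pi^{-1}(U)$ equals the pullback of $L^G|_U$, and $L$ viewed as a $G$-equivariant $A$-module is fppf-locally on $U$ isomorphic to $L^G|_U\otimes_k k[G]$, with $\rank_A(L^G)=\rank_S L$; the complement $F$, of codimension at least two, is negligible in the dimension count $p^{de}$. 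Hence the fibre dimension is distributed in the limit according to the character of $k[G]$, scaled by $\rank_S L$. Granting this, continuity of $\FL$ and of $W\mapsto[(S\otimes_k W)^G]$ on classes gives
\[
\FL(L^G)=\frac{\rank_S L}{\dim_k k[G]}\,[(S\otimes_k k[G])^G]=\frac{\rank_S L}{\dim_k k[G]}\,[S''],
\]
and substituting the decomposition of $[k[G]]$ recovers the second form.

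I expect the equidistribution step to be the main obstacle. For a constant group it reduces to the free action on a dense set of closed points, as in the work of the first author and Symonds, but for a genuinely infinitesimal or non-reduced $G$ there are no such free closed points, and ``freeness in codimension one'' must be exploited scheme-theoretically through the torsor structure of $\pi$ over $U$ and faithfully flat descent. A related technical difficulty, to be handled by working with reflexive modules over the normal ring $A$ and by the additivity of $\FL$ rather than by exactness, is that $(-)^G$ is not exact when $G$ is not linearly reductive; the reduction to the fibre, and the appearance of the projective covers $P_i$ in place of the simples $V_i$, is precisely the shadow of this non-exactness.
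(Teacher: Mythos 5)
Your outline captures the shape of the argument only in the case where $G$ is \'etale (essentially the Hashimoto--Symonds setting), and even there two steps are asserted rather than proved; more importantly, it has no mechanism for the infinitesimal part of $G$, which is where the genuinely new difficulty of this theorem lies. Your starting point ``${}^e(L^G)=({}^eL)^G$'' does not typecheck for a non-reduced group scheme: as the paper notes explicitly, there is no canonical way to make ${}^eM$ a $G$-module for a $G$-module $M$ unless $G$ is \'etale, so neither the commutation of invariants with Frobenius nor your ``equidistribution of the Frobenius fibres in the representation ring of $G$'' is even well posed for general finite $G$. The paper's proof instead writes $G=G^\circ\rtimes G\red$, uses the nilpotence of $\Ker(k[G^\circ]\to k)$ to find $e_0$ with $S^{p^{e_0}}\subset S^{G^\circ}$ (Lemma~\ref{infinitesimal-invariant-finite.lem}), so that $F={}^{e_0}(L^{G^\circ})$ becomes a reflexive module over $S'$ acted on only by the \'etale group $H=G\red$; it then applies the \'etale case (Proposition~\ref{etale-case2.prop}) to $F$ and reassembles the answer by tensoring $k[G^\circ]$ back in through the reflexive-module equivalence of Lemma~\ref{almost-Grothendieck-theorem.lem} and the isomorphism of Lemma~\ref{HNG.lem}. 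Some reduction of this kind is indispensable and is entirely absent from your proposal.

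Two further gaps remain even in the \'etale part. First, the filtration step fails for a reflexive non-free $L$: there is no filtration of ${}^eL$ with associated graded $S\otimes_k(L/\fm^{[p^e]}L)$ unless ${}^eL$ is $S$-free, and in any case classes in $\Theta^\circ$ are taken modulo split exact sequences only, so $[{}^eL]$ is not determined by an associated graded. The paper instead sandwiches $S^\ell\hookrightarrow L\hookrightarrow M$ with $M$ free and runs a Hilbert--Kunz count on images and cokernels, using that $k[G]\otimes_kS$ is injective in the exact category of $S$-free objects of $\Cal C(G,S)$ to split off the copies it produces (Proposition~\ref{etale-case2.prop}). Second, your equidistribution claim is essentially the theorem for free modules, not a quotable lemma: the fibre $L/\fm^{[p^e]}L$ is supported at the irrelevant maximal ideal, which lies outside the free locus $U$, so fppf-local triviality over $U$ says nothing directly about it; the actual input is Lemma~\ref{split-mono.lem}, which produces a $G$-module summand of $S$ isomorphic to $k[G]$ (already nontrivial for a non-constant \'etale $G$ over a finite field), fed into the counting machinery of \cite{HS}. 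In short, the proposal identifies the correct target and the role of smallness, but all three load-bearing steps are unjustified and the treatment of $G^\circ$ would fail as written.
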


In particular, we have that $s(A)=1/\dim_kk[G]$ if $G$ is linearly reductive, and $s(A)=0$ otherwise.
This fact for the case that $G$ is linearly reductive
is deduced easily from \cite[Theorem~4.8]{Carvajal-Rojas}.
The case that $G$ is not linearly reductive ($s(A)=0$, or $A$ is not strongly $F$-regular)
is proved in \cite[Proposition~7.2]{LMM} for very small actions.

In section~2, we review some basic facts for small actions of group schemes.
Some of them are found in \cite{Hashimoto3} in very general forms which are much more than we need here,
and we gave shorter proofs for some of them here for convenience of readers.
In section~3, we prove our main theorem and some corollaries.

After the first version of this paper put on arXiv,
we got aware that \cite[Theorem~1.6]{LY} by Liedtke and Yasuda has some overlap with
Corollary~\ref{main-cor3.cor}.

The authors are grateful to Professor Anurag Singh, Professor Kei-ichi Watanabe, and
Professor Takehiko Yasuda for
valuable discussion and encouragement.

\section{Preliminaries}

\paragraph
Throughout this article, $k$ denotes a field.
For a $k$-scheme $X$, we denote by $k[X]$ the $k$-algebra $H^0(X,\O_X)$.

\paragraph
Let $G$ be an affine $k$-group scheme.
For an affine $G$-scheme $X$, $G$ acts on $k[X]$, the coordinate ring of $X$, by $(gf)(x)=f(g^{-1}x)$.
In other words, letting $\psi:k[X]\rightarrow k[G]\otimes k[X]$ be the map corresponding to the
action $G\times X\rightarrow X$, the comodule structure of $k[X]$ is the composite
\[
k[X]\xrightarrow \psi k[G]\otimes k[X]\xrightarrow{T} k[X]\otimes k[G]\xrightarrow{1_{k[X]}\otimes \Cal S} k[X]\otimes k[G],
\]
where $T(\alpha\otimes f)=f\otimes \alpha$, and $\Cal S$ is the antipode.
Note that $k[X]$ is a $k[G]$-comodule algebra ($k$-algebra $G$-module such that the product
$k[X]\otimes k[X]\rightarrow k[X]$ and the unit map $k\rightarrow k[X]$ are $G$-linear).

\paragraph
Let $G$ be a $k$-group scheme of finite type, and $N$ its normal closed subgroup scheme.
Let $f:X\rightarrow Y$ be a $k$-morphism between $k$-schemes of finite type on which $G$ acts.
We say that $f$ is a $G$-enriched principal $N$-bundle if $f$ is faithfully flat, $G$-equivariant, and
locally a trivial $N$-bundle with respect to the fppf topology.
As $f$ itself is fppf (i.e., faithfully flat and of finite presentation), this is
equivalent to say that the $G$-equivariant
$X$-morphism $\Psi: N\times X\rightarrow X\times_Y X$ given by
$\Psi(n,x)=(nx,x)$ is an isomorphism, where $G$ acts on $N$ by the conjugate action.

\begin{lemma}\label{Gro.lem}
  Assume that $G$, $N$, $X$, and $Y$ are all affine, and
  $f:X\rightarrow Y$ is a $G$-enriched principal $N$-bundle.
  Then $\Cal G:=(-)^N:\Mod(G,k[X])\rightarrow \Mod(G/N,k[Y])$ is an equivalence.
  The quasi-inverse is given by $\Cal F:=k[X]\otimes_{k[Y]}-$, and this is an
  equivalence of monoidal categories.
\end{lemma}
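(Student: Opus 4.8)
The plan is to pass to coordinate rings and recognize the statement as an equivariant form of faithfully flat Hopf--Galois descent. Write $A=k[Y]$, $B=k[X]$ and $H=k[N]$. Restricting the $G$-action along $N\hookrightarrow G$ makes $B$ an $H$-comodule algebra, with coaction $\rho\colon B\to B\otimes H$, and $f$ corresponds to a $k$-algebra map $A\to B$ whose image lies in the $N$-invariants $B^N=\{b\in B:\rho(b)=b\otimes 1\}$. The comorphism of $\Psi\colon N\times X\to X\times_Y X$ is the canonical Galois map
\[
\beta\colon B\otimes_A B\to B\otimes_k H,\qquad b\otimes b'\mapsto (b\otimes 1)\,\rho(b'),
\]
so the hypothesis that $\Psi$ is an isomorphism says precisely that $\beta$ is an isomorphism; together with the faithful flatness of $A\to B$ this is exactly the statement that $B$ is a faithfully flat $H$-Galois extension of $A$. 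The residual $G$-action commutes with the $N$-action and so carries along a $G/N$-structure throughout.

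First I would record that $B^N=A$. Using $\beta$, the equalizer diagram $A\to B\rightrightarrows B\otimes H$ defining $B^N$ is identified with the Amitsur descent diagram $A\to B\rightrightarrows B\otimes_A B$, which is exact because $A\to B$ is faithfully flat; hence $B^N=A$. I would then set up the two functors: $\mathcal G(M)=M^N$, an $A$-module carrying the residual $G/N$-coaction, and $\mathcal F(W)=B\otimes_A W$, a $B$-module with the diagonal coaction built from $\rho$ on $B$ and the inflation of the $G/N$-coaction of $W$ along $G\to G/N$. Checking that these land in $\Mod(G/N,A)$ and $\Mod(G,B)$ respectively and are functorial is routine verification with the comodule axioms.

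The core is to produce the unit and counit isomorphisms. For the unit $W\to\mathcal G\mathcal F(W)=(B\otimes_A W)^N$, note that the $H$-coaction on $B\otimes_A W$ acts only on the $B$-factor, so its invariants are computed by applying $-\otimes_A W$ to the exact equalizer $A\to B\rightrightarrows B\otimes H$; since the Amitsur complex of the faithfully flat extension $A\to B$ is a resolution that remains exact under any base change, this yields $(B\otimes_A W)^N=A\otimes_A W=W$. For the counit, the multiplication map $\mathcal F\mathcal G(M)=B\otimes_A M^N\to M$ must be shown to be an isomorphism of $(G,B)$-modules; this is the fundamental theorem on Hopf modules, and it is here that the Galois condition is used in full strength. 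After the faithfully flat base change $-\otimes_A B$ one uses $\beta$ to trivialize the comodule $M\otimes_A B$, reducing the claim to the elementary isomorphism for a cofree Hopf module, and faithful flatness then descends the isomorphism back to $M$. I expect this counit isomorphism to be the main obstacle, since it requires both the isomorphy of $\beta$ and faithful flatness, and one must check throughout that the full $G$-equivariance, and not merely the $N$-structure, is preserved.

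Finally, for the monoidal claim I would exhibit the canonical isomorphism
\[
B\otimes_A(W_1\otimes_A W_2)\;\cong\;(B\otimes_A W_1)\otimes_B(B\otimes_A W_2),
\]
verify that it is compatible with the diagonal $G$-coactions and with the associativity and unit constraints, and note that $\mathcal F$ sends the unit $A$ to the unit $B$, so that $\mathcal F$ is strong monoidal. As $\mathcal F$ is an equivalence with quasi-inverse $\mathcal G$, the equivalence is then monoidal and $\mathcal G$ inherits a canonical monoidal structure.
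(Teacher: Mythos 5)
Your proposal is correct and follows essentially the same route as the paper: both set up the adjunction between $\mathcal F=k[X]\otimes_{k[Y]}-$ and $\mathcal G=(-)^N$, use faithfully flat base change along $k[Y]\to k[X]$ together with the isomorphism $\beta$ (the comorphism of $\Psi$) to reduce to the trivial (cofree) case, and then invoke the fundamental theorem of Hopf modules for the counit. The only real difference is cosmetic --- you phrase the reduction in Hopf--Galois/faithfully-flat-descent language, while the paper writes the same step geometrically as base change along $X'\to Y$ to the trivial bundle $N\times Y$ and cites Sweedler's Theorem~4.1.1.
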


\begin{proof}
  It is easy to see that $\Cal G$ is right adjoint to $\Cal F$.
  Indeed,
\begin{multline*}
  \Hom_{G,k[X]}(k[X]\otimes_{k[Y]}L,M)\cong \Hom_{k[Y]}(L,M)^G\\
  =
  (\Hom_{k[Y]}(L,M)^N)^{G/N}=\Hom_{G/N,k[Y]}(L,M^N)
\end{multline*}
  in a natural way for $L\in\Mod(G/N,k[Y])$ and $M\in\Mod(G,k[X])$.

  To verify that the unit of adjunction $u:L\rightarrow (k[X]\otimes_{k[Y]}L)^N$ ($u(\alpha)=1\otimes\alpha$)
  and the counit of adjunction $\varepsilon: k[X]\otimes_{k[Y]} M^N\rightarrow M$ ($\varepsilon(f\otimes m)=fm$)
  are isomorphisms, we may and shall assume that $G=N$ and $G/N$ is trivial.
  
  As $k[Y]\rightarrow k[X]'$ is flat, where $k[X]'$ is the $k$ algebra $k[X]$
  with the trivial $N$-action, we have that $k[X]'\otimes_{k[Y]}(-)^N\rightarrow(k[X]'\otimes_{k[Y]}-)^N$
  is an isomorphism between the functors $\Mod(N,k[Y])\rightarrow \Mod k[X]'$.
  So taking the base change by $X'\rightarrow Y$, where $X'$ is the $k$-scheme $X$ with the
  trivial $N$-action, we may assume that $X=N\times Y$ is the trivial $N$-bundle.
  
  As the counit of adjunction $\varepsilon:\Cal F\Cal G\rightarrow \Id$ is given by
  $\Cal F\Cal G M=k[X]\otimes_{k[Y]}M^N\cong k[N]\otimes_k M^N\rightarrow M=\Id M$,
  where the last map is given by $f\otimes m\mapsto fm$ (note that
  $M$ is an $(N,k[N])$-module), which is an isomorphism, see \cite[Theorem~4.1.1]{Sweedler}.
  Conversely, the unit of adjunction
  $L\rightarrow \Cal G\Cal F L=(k[X]\otimes_{k[Y]}L)^N\cong (k[N]\otimes_kL)^N$ given by
  $n\mapsto 1\otimes n$ is an isomorphism for $L\in \Mod k[Y]$.
  Thus $\Cal F$ and $\Cal G$ are quasi-inverse each other.

  As $\Cal F$ preserves the monoidal structure, the equivalence is that of monoidal categories.
\end{proof}

\paragraph
Let $G$ be a $k$-group scheme of finite type, $N$ its normal closed subgroup scheme,
and $f:X\rightarrow Y$ a $G$-enriched almost principal $N$-bundle if
$f$ is $G$-equivariant, the action of $N$ on $Y$ is trivial, there exist $G$-stable
open subset $V$ of $Y$ and $U$ of $f^{-1}(V)$ such that $\codim(Y\setminus V,Y)\geq 2$,
$\codim(X\setminus U,X)\geq 2$, and $f_U:U\rightarrow V$ is a $G$-enriched principal $N$-bundle.
Considering the case that $G=N$,
a $G$-enriched almost principal $G$-bundle is simply called an almost principal $G$-bundle.
For basics on almost principal $G$-bundles, see \cite{Hashimoto3}.

\begin{lemma}\label{almost-Grothendieck-theorem.lem}
  Let $N$ be a normal closed subgroup scheme of $G$.
  Assume that $f:X\rightarrow Y$ is a $G$-enriched almost principal $N$-bundle with $G$, $X$ and $Y$ are all affine.
  Assume that both $X$ and $Y$ are normal.
  Then
  \begin{enumerate}
  \item[\rm(1)] The canonical map $k[Y]\rightarrow k[X]^N$ is an isomorphism of $G/N$-algebras.
  \item[\rm(2)] The functors $\Cal G=(-)^N:\Ref(G,k[X])\rightarrow \Ref(G/N,k[Y])$ and
    $\Cal F=(k[X]\otimes_{k[Y]}-)^{**}:\Ref(G/N,k[Y])\rightarrow \Ref(G,k[X])$ are
    quasi-inverse each other, and give an equivalence of monoidal categories,
    where $\Ref(G/N,k[Y])$ denotes the category of $(G/N,k[Y])$-modules
    which are finitely generated reflexive as $k[Y]$-modules,
    and $\Ref(G,k[X])$ denotes the category of $(G,k[X])$-modules
    which are finitely generated reflexive as $k[X]$-modules.
    The counit $\varepsilon: (k[X]\otimes_{k[Y]}M^G)^{**}\rightarrow M$
    is the double dual of $\alpha\otimes m\mapsto \alpha m$, and the unit
    $u: L\rightarrow ((k[X]\otimes_{k[Y]}-)^{**})^N$ is the composite
    \[
    L\cong k[Y]\otimes_{k[Y]}L\rightarrow (k[X]\otimes_{k[Y]}L)^N\rightarrow
    ((k[X]\otimes_{k[Y]}L)^{**})^N.
    \]
    \item[\rm(3)] The equivalence preserves the rank of modules.
  \end{enumerate}
\end{lemma}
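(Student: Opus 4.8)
```latex
The plan is to reduce everything to the already-established principal-bundle
case, \emph{Lemma~\ref{Gro.lem}}, by restricting to the good open loci $U$ and
$V$ and then controlling what happens when we glue back over the bad loci of
codimension $\geq 2$ using normality. Because $X$ and $Y$ are normal and
affine, a finitely generated $k[X]$- (resp.\ $k[Y]$-) module which is reflexive
is determined by its restriction to any open subset whose complement has
codimension $\geq 2$; equivalently, for a reflexive module $M$ one has
$\Gamma(U,\widetilde M)\cong M$ when $\codim(X\setminus U,X)\geq 2$. This
"reflexive modules are determined by codimension-one data" principle is the
engine driving all three parts, and I would isolate it as the first step,
together with the observation that $(-)^{**}$ is compatible with the $G$-action
so that the double-dualized objects genuinely land in $\Ref(G,k[X])$ and
$\Ref(G/N,k[Y])$.

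For part (1), restrict to the good locus: over $V$ the map $f_U:U\to V$ is a
genuine $G$-enriched principal $N$-bundle, so $k[V]\to k[U]^N$ is an
isomorphism of $G/N$-algebras by the monoidal equivalence of
\emph{Lemma~\ref{Gro.lem}} applied with the structure sheaf as the trivial
module (or directly from faithfully flat descent along $f_U$). Now
$k[Y]\to k[X]^N$ is the global-sections version of this, and since
$\codim(Y\setminus V,Y)\geq 2$ and $Y$ is normal, $k[Y]=\Gamma(V,\O_Y)$;
similarly $k[X]^N=(\Gamma(U,\O_X))^N=\Gamma(V,(\O_X)^N)$, using that taking
$N$-invariants commutes with the affine pushforward along the good locus. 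These
two identifications let the isomorphism over $V$ extend uniquely to the
isomorphism $k[Y]\xrightarrow{\sim}k[X]^N$ of $G/N$-algebras claimed.

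For part (2), the strategy is: the functors $\Cal F=(k[X]\otimes_{k[Y]}-)^{**}$
and $\Cal G=(-)^N$ both commute with restriction to the good loci up to the
codimension-$\geq 2$ identification, and over the good loci they are the
honest equivalence of \emph{Lemma~\ref{Gro.lem}}. Concretely, given
$M\in\Ref(G,k[X])$, its restriction $M|_U$ corresponds under the principal
bundle equivalence to $(M|_U)^N=(M^N)|_V$, and reflexivity of $M^N$ over $k[Y]$
(which I would check using part (1) and the fact that invariants of a reflexive
module over a normal ring, taken along a torsor in codimension one, stay
reflexive) recovers $M^N$ from $(M^N)|_V$. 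The unit $u$ and counit
$\varepsilon$ written in the statement restrict over $U$, $V$ to the unit and
counit of \emph{Lemma~\ref{Gro.lem}}, which are isomorphisms there; since both
source and target are reflexive and the maps are isomorphisms in
codimension one, the reflexive criterion of the first step forces $u$ and
$\varepsilon$ to be isomorphisms globally. The monoidal compatibility is
inherited from the monoidal equivalence over the good loci, because
$(-)^{**}$ of a tensor product agrees with the reflexive tensor product and
these constructions are local in codimension one.

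Part (3) is then immediate once (2) is in hand: rank is computed at the generic
point, which lies in both good loci, and over the good loci $\Cal F$ is
base change along the faithfully flat $f_U$ followed by double-dualization,
neither of which changes the rank (base change along a principal $N$-bundle
multiplies and then $N$-invariants divide the rank back in a way that is rank
preserving for the equivalence, exactly as in \emph{Lemma~\ref{Gro.lem}}, and
$(-)^{**}$ preserves rank). The main obstacle I anticipate is the second step
of part (2): one must verify carefully that $\Cal F=(k[X]\otimes_{k[Y]}-)^{**}$
of a reflexive $k[Y]$-module is not merely reflexive over $k[X]$ but that its
$N$-invariants are computed correctly, i.e.\ that double-dualization and taking
$N$-invariants interact well across the bad locus. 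This is precisely where
normality of $X$ and $Y$ and the codimension-$\geq 2$ hypotheses must be used
in tandem, and getting the comparison of $u$ and $\varepsilon$ with their
good-locus counterparts to be $G$-equivariant and compatible with the
reflexive-hull functors is the delicate bookkeeping that makes the proof work.
```
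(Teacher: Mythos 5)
Your proposal is essentially the paper's own argument: both reduce to the principal-bundle case (Lemma~\ref{Gro.lem}) and then use the fact that reflexive modules over normal rings are determined in codimension one to check that the unit and counit, the monoidal compatibility, and the rank are all controlled by what happens over the good loci; the paper merely phrases this via localization at height-$\leq 1$ primes and intersections $M=\bigcap_P M_P$ rather than via restriction to the open sets $U$ and $V$. The one substantive step your sketch glosses over is the \emph{finite generation} of $M^N$ over $k[Y]$ (note $N$ need not be finite here, so $f$ need not be finite), which the paper establishes by using quasi-compactness of the set of height-one primes to find a finitely generated $L\subset M^N$ with $L_P=(M^N)_P$ for all such $P$, whence $M^N=L^{**}$.
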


\begin{proof}
  We may assume that $G=N$.
  We may discuss componentwise, and we may assume that $Y$ is connected, and hence $k[Y]$ is
  a normal domain.
  There is an open subset $V$ of $Y$ and an $N$-stable open subset $U$ of $f^{-1}(V)$ such that
  $f:U\rightarrow V$ is a principal $N$-bundle, $\codim(Y\setminus V,Y)\geq 2$, and $\codim (X\setminus U,X)\geq 2$.

  First consider the case that $V=Y$.
  As $G=N$ is affine, $f:U\rightarrow Y$ is affine, and hence $U$ is affine.
  As $X$ is normal and $\codim(X\setminus U,X)\geq 2$, we have that $U=X$.
  Thus $f$ itself is a principal $N$-bundle.
  Then $M\in\Mod(N,k[X])$ is finitely generated as a $k[X]$-module and is reflexive if and only if
  $M^N\in\Mod(N,k[Y])$ is finitely generated as a $k[Y]$-module and is reflexive.
  Indeed, As $k[Y]\rightarrow k[X]$ is faithfully flat and $M\cong k[X]\otimes_{k[Y]}M^N$, we have that
  $M$ is finitely generated if and only if $M^N$ is finitely generated.
  If this is the case,
  \[
  (M^*)^N=\Mod(N,k[X])(M,k[X])\cong \Mod(k[Y])(M^N,k[X]^N)=(M^N)^*.
  \]
  In particular, $M$ is reflexive if and only if $M^N$ is so.
  So the assertion of the lemma follows from Lemma~\ref{Gro.lem} this case.
  In particular, the lemma is true if $\dim Y\leq 1$.
  So the lemma is true for $f_P:X_P\rightarrow Y_P$ for every $P\in \Spec k[Y]$ with
  $\height P\leq 1$, where $f_P:X_P=\Spec k[X]_P\rightarrow Y_P=\Spec k[Y]_P$ is the
  base change of $f$.
  
  As $k[X]$ is normal, we can write $k[X]=B_1\times\cdots\times B_r$, where $B_i$ is a normal domain.
Let $M\in\Ref(G,k[X])$, and $Q$ be a height-one prime ideal of $k[X]$.
  Then $Q$ as a point of $X$ lies in $U$.
  As $f:U\rightarrow Y$ is flat, we have that $\height(Q\cap k[Y])\leq 1$.
  Let $M_i=B_i\otimes_{k[X]}M$.
  Then
  \begin{multline*}
  M_i
  \subset
  \bigcap_{P\in\Spec k[Y],\;\height P\leq 1} (M_i)_P\\
  =
  \bigcap_{P\in\Spec k[Y],\;\height P\leq 1}\bigcap_{Q\in\Spec B_i,\;\height Q\leq 1,\;Q\cap k[Y]\subset P} (M_i)_Q
  =
  \bigcap_{Q\in \Spec B_i,\;\height Q\leq 1}(M_i)_Q=M_i,
  \end{multline*}
  where the intersection is taken in $Q(B_i)\otimes_{B_i}M_i$.
  This shows that
  \begin{multline*}
  M=\prod_{i=1}^r M_i=\prod_{i=1}^r(\bigcap_{P\in\Spec k[Y],\;\height P\leq 1}(M_i)_P)\\
  =\bigcap_{P\in\Spec k[Y],\;\height P\leq 1}\prod_{i=1}^r(M_i)_P
  =\bigcap_{P\in\Spec k[Y],\;\height P\leq 1}M_P.
  \end{multline*}
  So
  \[
  M^N=(\bigcap_PM_P)^N=\bigcap_P (M_P)^N=\bigcap_P(M^N)_P.
  \]
  As $Y$ is a Noetherian space and hence the subset $\{P\in\Spec k[Y]\mid \height P=1\}$ of $\Spec k[Y]$ is quasi-compact,
  we have that there is a finitely generated $k[Y]$-submodule $L$ of $M^N$ such that $L_P=(M^N)_P$ for
  each $P$.
  So $M^N=\bigcap_P(M^N)_P=\bigcap_P L_P=L^{**}$ is also finitely generated.
  As $(M^N)_P\cong (M_P)^N$ is reflexive and hence is free, $M^N\cong \bigcap_P(M^N)_P$ is also reflexive.
  Thus $\Cal G$ is well-defined.
  Note also that $\Cal F$ is also well-defined, since a double dual of a finitely generated
  module over a Noetherian ring is a second syzygy.
  
We want to prove that $\Cal F L\xrightarrow{u}\Cal F\Cal G\Cal F L\xrightarrow\varepsilon \Cal F L$ 
  is the identity, and $\Cal G M \xrightarrow u \Cal G\Cal F\Cal G M\xrightarrow\varepsilon\Cal G M$
  is the identity.
  As the canonical map
  \[
  \Hom_{k[Y]}(\Cal G M,\Cal G M)\rightarrow\prod_P \Hom_{k[Y]_P}(\Cal G_P M_P,\Cal G_P M_P)
\]
  is injective,
  where the product runs through all the minimal prime ideals $P$ of $k[Y]$, and
  $\Cal G_P:\Ref(G,k[X]_P)\rightarrow \Ref(k[Y]_P)$ is the functor $(-)^G$,
  to see that the element $\id_{\Cal G M}-\varepsilon u$ is zero, we may assume that $k[Y]$ is a field, and
  this case is trivial, where $\varepsilon u$ is the composite
  \[
  \Cal G M\xrightarrow u \Cal G\Cal F\Cal G M\xrightarrow\varepsilon \Cal G M.
  \]
  Using a similar argument, we can prove that $\id_{\Cal F N}-\varepsilon u$ is zero,
  and thus $\varepsilon u=\id$.
  So $\Cal G$ is right adjoint to $\Cal F$.
  Moreover, since $\Cal F\Cal G M$ is reflexive if $M$ is reflexive, the counit $\varepsilon :\Cal F\Cal G M
  \rightarrow M$ is an isomorphism if and only if $\varepsilon_P:\Cal F_P\Cal G_P M_P\rightarrow M_P$ is
  an isomorphism for $P\in\Spec k[Y]$ whose height is at most one.
  As $f_P:X_P\rightarrow Y_P$ is a principal $G$-bundle and a reflexive $k[X]_P$-module and a reflexive
  $k[Y]_P$-module are free over $k[X]_P$ and $k[Y]_P$, respectively for $P\in\Spec k[Y]$ whose height is
  at most one, we have that the counit $\varepsilon:\Cal F\Cal G \rightarrow\Id_{\Ref(G,k[X])}$ is an
  isomorphism.
  Similarly, the unit of adjunction $u:\Id_{\Ref(k[Y])}\rightarrow\Cal G\Cal F$ is also an isomorphism.
  Thus $\Cal F$ and $\Cal G$ are quasi-inverse each other.

  As the unit of adjunction $u: k[Y]\rightarrow \Cal G\Cal F k[Y]=k[X]^N$ is an isomorphism,
  the first assertion is now clear.

  Let $L,L'\in\Ref(G/N,k[Y])$, and consider the canonical map
  \[
  (k[X]\otimes_{k[Y]}(L\otimes_{k[Y]}L'))^{**}
  \rightarrow
  (k[X]\otimes_{k[Y]}(L\otimes_{k[Y]}L')^{**})^{**}.
  \]
  This is a morphism in $\Ref(G,k[X])$.
  To see that this is an isomorphism, we may localize at $P\in\Spec k[Y]$ with $\height P=1$,
  and this case is obvious.
  Similarly, to see that 
  \[
  ((k[X]\otimes_{k[Y]}L)\otimes_{k[X]}(k[X]\otimes_{k[Y]}L'))^{**}
  \rightarrow
  ((k[X]\otimes_{k[Y]}L)^{**}\otimes_{k[X]}(k[X]\otimes_{k[Y]}L')^{**})^{**}
  \]
  is an isomorphism, it suffices to show that this is bijective, and
  we may localize at a height one prime $P$ of $k[Y]$ again, and this case is obvious.
  
  Thus $\Cal F$ preserves the monoidal structure and the rank of modules.
\end{proof}
  
\paragraph
From now on, we assume that $k$ is a perfect field of characteristic $p>0$, and
$G$ be a finite $k$-group scheme.
  We set $N=G^\circ$, the identity component of $G$, and $H=G\red$.
  As $k$ is perfect, $H$ is a closed subgroup scheme of $G$, and is \'etale over $k$.
  Note that the composite $H\hookrightarrow G\rightarrow G/N\cong \pi_0G$ is an isomorphism,
  where $\pi_0G$ is the unique maximal \'etale quotient of $G$.
  In other words, $k[\pi_0G]$ is the $k$-subalgebra of $k[G]$ generated by all the
  \'etale $k$-subalgebras of $k[G]$.
  So $G$ is the semidirect product $G=N\rtimes H$.

\begin{lemma}\label{infinitesimal-invariant-finite.lem}
    There exists some $e_0\geq 1$ such that $B^{p^{e_0}}\subset B^N\subset B$,
    where $B^{p^{e_0}}$ is the image of the $e$th Frobenius map $F^e:B\rightarrow B$.
    In particular, $B^N\rightarrow B$ is finite, and $B^N$ is finitely generated
    over $k$.
\end{lemma}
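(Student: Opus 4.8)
The plan is to exploit that $N=G^\circ$ is infinitesimal. Since $G$ is finite, $k[G]$ is a finite-dimensional $k$-algebra, and $k[N]$, being a quotient Hopf algebra, is a finite-dimensional $k$-algebra; as $N$ is connected, $\Spec k[N]$ is a single point, so $k[N]$ is local with maximal ideal equal to the augmentation ideal $\fm_N=\ker(\varepsilon\colon k[N]\to k)$. A finite-dimensional local $k$-algebra has nilpotent maximal ideal, so I fix $M$ with $\fm_N^M=0$ and then choose $e_0\ge 1$ with $p^{e_0}\ge M$. This nilpotency is the only place the infinitesimal nature of $N$ is used.

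First I would prove $B^{p^{e_0}}\subseteq B^N$. Let $\rho\colon B\to B\otimes_k k[N]$ be the coaction; it is a homomorphism of $k$-algebras, and the counit axiom $(\id_B\otimes\varepsilon)\rho=\id_B$ together with the decomposition $k[N]=k\oplus\fm_N$ lets me write $\rho(b)=b\otimes 1+\eta$ with $\eta\in B\otimes_k\fm_N$ for every $b\in B$. Since $B\otimes_k k[N]$ is a commutative ring of characteristic $p$, the $p^{e_0}$-th power map is additive, so
\[
\rho(b^{p^{e_0}})=\rho(b)^{p^{e_0}}=(b\otimes 1)^{p^{e_0}}+\eta^{p^{e_0}}=b^{p^{e_0}}\otimes 1+\eta^{p^{e_0}}.
\]
Writing $\eta=\sum_i b_i\otimes x_i$ with $x_i\in\fm_N$, additivity of Frobenius gives $\eta^{p^{e_0}}=\sum_i b_i^{p^{e_0}}\otimes x_i^{p^{e_0}}$, and each $x_i^{p^{e_0}}\in\fm_N^{p^{e_0}}\subseteq\fm_N^{M}=0$. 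Hence $\rho(b^{p^{e_0}})=b^{p^{e_0}}\otimes 1$, which is exactly the condition $b^{p^{e_0}}\in B^N$; the inclusion $B^N\subseteq B$ is trivial.

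For the remaining assertions I would use that $k$ is perfect, so the finitely generated $k$-algebra $B$ is module-finite over the image $B^{p^{e_0}}$ of the $e_0$-th Frobenius $F^{e_0}\colon B\to B$. Indeed, choosing algebra generators $x_1,\dots,x_n$ of $B$ over $k$, the subring generated by $k$ and the $x_j^{p^{e_0}}$ is contained in $B^{p^{e_0}}$ (this uses $k=k^{p^{e_0}}$), and $B$ is module-finite over it, being spanned by the finitely many monomials with each exponent $<p^{e_0}$; hence $B$ is module-finite over the intermediate ring $B^{p^{e_0}}$. Now $B^{p^{e_0}}\cong B/\ker F^{e_0}$ is a quotient of $B$, so it is a Noetherian finitely generated $k$-algebra, and the chain $B^{p^{e_0}}\subseteq B^N\subseteq B$ exhibits $B^N$ as a submodule of a finite $B^{p^{e_0}}$-module, hence itself a finite $B^{p^{e_0}}$-module. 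Consequently $B^N$ is a finitely generated $k$-algebra, and $B$ is module-finite over $B^N$ because $B^N\supseteq B^{p^{e_0}}$ and $B$ is module-finite over $B^{p^{e_0}}$.

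The second and third parts are routine $F$-finiteness once the first inclusion is established; the step I expect to require the most care is the identity $\rho(b)=b\otimes 1+\eta$ with $\eta^{p^{e_0}}=0$, i.e.\ combining the fact that the coaction is an algebra map in characteristic $p$ with the nilpotency of the augmentation ideal of the infinitesimal group scheme $N$.
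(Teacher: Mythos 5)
Your proof is correct and follows essentially the same route as the paper: both arguments rest on the nilpotency of the augmentation ideal of $k[N]$ (coming from $N$ being infinitesimal) combined with the additivity of the Frobenius applied to the coaction, which is an algebra map, to conclude $\rho(b^{p^{e_0}})=b^{p^{e_0}}\otimes 1$; the finiteness claims are then deduced from $F$-finiteness of $B$ over the perfect field $k$ exactly as in the paper. Your write-up is slightly more explicit about why $B$ is module-finite over $B^{p^{e_0}}$ and why $B^N$ is a finitely generated $k$-algebra, but there is no substantive difference in method.
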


\begin{proof}
  Let $J=\Ker \varepsilon$ be the kernel of the counit map $k[N]\rightarrow k$.
  Then $k[N]=k\oplus J$ as a $k$-vector space.
  As $N$ is infinitesimal (that is, $N\red=\Spec k$), $J$ is a nilpotent ideal, and
  hence there exists some $e_0\geq 1$ such that $J^{[p^{e_0}]}=0$, where
  $J^{[p^{e_0}]}$ is the ideal of $k[N]$ generated by $\{a^{p^{e_0}}\mid a\in J\}$.
  Then for $c+a\in k\oplus J=k[N]$, we have that
  $F^{e_0}(c+a)=c^{p^{e_0}}=F^{e_0}(u\varepsilon(c+a))$, where $u:k\rightarrow k[N]$ is the
  unit map.
  Hence $F^{e_0}=F^{e_0}u\varepsilon$.
  So for $b\in B$,
  \[
  \omega_B(b^{p^{e_0}})=(\omega_Bb)^{p^{e_0}}=\sum_{(b)}b_{(0)}^{p^{e_0}}\otimes b_{(1)}^{p^{e_0}}
  =\sum_{(b)}(b_{(0)}\varepsilon(b_{(1)}))^{p^{e_0}}\otimes 1=b^{p^{e_0}}\otimes 1.
  \]
  That is, $b^{p^{e_0}}\in B^N$, and the first assertion has been proved.
  
  As $B$ is finitely generated over a perfect field, $B$ is $F$-finite.
  So $B$ is finite over $B^{p^{e_0}}$.
  Hence, both $B^{p^{e_0}}\rightarrow B^N$ and $B^N\rightarrow B$ are also finite,
  and we are done.
\end{proof}

\begin{lemma}\label{invariant-finite.lem}
  $B^G$ is a finitely generated $k$-algebra, and $B^G\rightarrow B$ is finite.
\end{lemma}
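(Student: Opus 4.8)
The plan is to exploit the semidirect product decomposition $G=N\rtimes H$ established above, so that $B^G=(B^N)^H$, and to bootstrap from Lemma~\ref{infinitesimal-invariant-finite.lem}. That lemma already tells us that $B^N$ is a finitely generated $k$-algebra and that $B^N\to B$ is finite. Since $N=G^\circ$ is normal in $G$, the subgroup scheme $H=G\red$ stabilizes $B^N$, so it remains only to prove the corresponding statement for the finite \'etale group scheme $H$ acting on the finitely generated $k$-algebra $C:=B^N$; namely, that $C^H$ is finitely generated over $k$ and $C$ is finite over $C^H$. Composing $B^G=C^H\hookrightarrow C=B^N\to B$ with the finiteness already known for $B^N\to B$ will then yield the finiteness of $B^G\to B$.

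First I would reduce the \'etale group scheme to an honest finite group by base change. Because $k$ is perfect, its algebraic closure $\bar k$ is separable over $k$, and hence the finite \'etale $k$-group scheme $H$ becomes the constant group scheme attached to a finite group $\Gamma:=H(\bar k)$ after the faithfully flat base change $k\to\bar k$. Forming comodule invariants is the kernel of a $k$-linear map, so it commutes with the flat base change $-\otimes_k\bar k$; thus
\[
C^H\otimes_k\bar k\;\cong\;(C\otimes_k\bar k)^{\Gamma},
\]
the ordinary ring of invariants of the finite group $\Gamma$ acting on the finitely generated $\bar k$-algebra $C\otimes_k\bar k$.

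Next I would invoke the classical Noether finiteness theorem over $\bar k$: for a finite group $\Gamma$ acting on a finitely generated $\bar k$-algebra, the invariant ring is again a finitely generated $\bar k$-algebra and the whole algebra is module-finite over it. It then remains to descend these two properties along the faithfully flat extension $k\to\bar k$: both finite generation of a $k$-algebra and module-finiteness of a ring map descend along a faithfully flat base change, so $C^H=B^G$ is finitely generated over $k$ and $C=B^N$ is finite over $B^G$. Combined with the finiteness of $B^N\to B$ from Lemma~\ref{infinitesimal-invariant-finite.lem}, we conclude that $B^G\to B$ is finite.

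The only genuinely delicate point is that $H$ is a group scheme rather than an abstract finite group, so one cannot apply Noether's theorem directly; the remedy is precisely the base change to $\bar k$, where $H$ trivializes into a constant group, together with the flat-base-change compatibility of invariants and the faithfully flat descent of finiteness. I expect the verifications that invariants commute with $-\otimes_k\bar k$ and that $H_{\bar k}$ is constant to be routine, the latter being the standard equivalence, valid since $k$ is perfect, between finite \'etale $k$-schemes and finite sets with Galois action.
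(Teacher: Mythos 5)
Your proposal is correct and follows essentially the same route as the paper: decompose via $B^G=(B^N)^H$, dispose of the infinitesimal part by Lemma~\ref{infinitesimal-invariant-finite.lem}, and reduce the \'etale part to the constant-group case (Noether's theorem) by base change to $\bar k$. The paper states this reduction in one line, whereas you spell out the flat-base-change compatibility of invariants and the faithfully flat descent of finite generation and module-finiteness; these verifications are correct and routine, as you anticipate.
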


\begin{proof}
  As $B^G=(B^N)^H$, we may assume that either $G=N$ or $G=H$.
  The case that $G=N$ is done in Lemma~\ref{infinitesimal-invariant-finite.lem}.
  The case that $G=H$ is reduced to the case that $k$ is algebraically closed.
  In that case, $G$ is a constant finite group, and this is well-known.
\end{proof}

\paragraph\label{W'.par}
For an $H$-module $W$, we denote its restriction by the canonical homomorphism $G\rightarrow G/N\cong H$ by $W'$.
Thus the restriction of $W'$ on $N$ is trivial, while the restriction of $W'$ on $H$ is $W$.
We adopt this notation for $G$-modules $M$.
We regard $M$ as its restriction to $H$, and then consider the $G$-module $M'$.
Thus $M'$ is the restriction of the $G$-module $M$ to $G$ with respect to the homomorphism
$\rho:G\rightarrow G/N\cong H\hookrightarrow G$.

\paragraph
The group scheme $G$ viewed as the $G$-scheme with the left (resp.\ right) regular $G$-action is denoted by $G_l$ (resp.\ $G_r$).
That is, the action $G\times G_l\rightarrow G_l$ is given by $(g,g_1)\mapsto gg_1$
(resp.\ $G\times G_r\rightarrow G_r$ is given by $(g,g_2)\mapsto g_2g^{-1}$).
Note that the inverse $\iota:G_l\rightarrow G_r$ is the isomorphism of $G$-schemes.
Note also that the coaction of $k[G_l]$ is given by $f\mapsto \sum_{(f)}f_{(2)}\otimes \Cal Sf_{(1)}$, where
$\Cal S:k[G]\rightarrow k[G]$ is the antipode.
The coaction of $k[G_r]$ is the coproduct $f\mapsto \sum_{(f)}f_{(1)}\otimes f_{(2)}$.

\paragraph
We consider that $G=N\rtimes H$ acts on $N_l$ by $(nh)(n_1)=nhn_1h^{-1}$, and on $H_l$ by $(nh)(h_1)=hh_1$.
Note that the product $N_l\times H_l$ is isomorphic to $G_l$ by $(n_1,h_1)\mapsto n_1h_1$.
Similarly, $H_r\times N_r\rightarrow G_r$ given by $(h,n)\mapsto hn$ is an isomorphism,
where $H$ acts on $N_r$ by $(h,n)\mapsto hnh^{-1}$, and $N$
acts on $H_r$ trivially.
In particular, we get an isomorphism of $G$-modules $k[G_r]\cong k[H_r]'\otimes k[N_r]$.

\paragraph
A $G$-module $W$ is both an $N$-module and an $H$-module, and the composite of the actions $h\circ n\circ h^{-1}$
agrees with the action of $hnh^{-1}\in N$.
The converse is also true, and a $G$-linear map is nothing but an $N$-linear $H$-linear mapping.

\begin{lemma}
  For a $G$-module $W$, the map
  $\Box: W\otimes k[N_r] \rightarrow W'\otimes k[N_r]$ given by $w\otimes \alpha\mapsto
  \sum_{(w)}w_{(0)}\otimes w_{(1)}\alpha$ is an isomorphism of $(G,k[N_r])$-modules.
\end{lemma}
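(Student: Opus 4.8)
The plan is to separate the two independent features of the claim — the module-theoretic equivariance and the bijectivity — by first factoring $\Box$ through the $N$-coaction of $W$ and then producing an explicit inverse built from the antipode. Write $\rho\colon W\to W'\otimes k[N_r]$, $\rho(w)=\sum_{(w)}w_{(0)}\otimes w_{(1)}$, for the $N$-coaction of $W$, obtained by restricting the $k[G]$-comodule structure along the Hopf surjection $k[G]\to k[N]$ and reading the target with the right-regular coaction $\Delta$ of $k[N_r]$. Then $\Box$ is visibly $k[N_r]$-linear, and a one-line check shows it factors as
\[
\Box=(\id_{W'}\otimes\mu)\circ(\rho\otimes\id_{k[N_r]}),
\]
where $\mu\colon k[N_r]\otimes k[N_r]\to k[N_r]$ is the multiplication of the coordinate algebra.

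Next I would deduce $G$-equivariance from this factorization together with $G=N\rtimes H$. Since $k[N_r]$ is a $G$-algebra, $\mu$ is $G$-linear, so it suffices to prove that $\rho\colon W\to W'\otimes k[N_r]$ is $G$-linear, and this I would verify on $N$ and on $H$ separately. For the infinitesimal part $N$ the required equivariance is exactly coassociativity $(\rho\otimes\id)\rho=(\id\otimes\Delta)\rho$, which says precisely that the coaction is a comodule morphism into the module carrying the trivial $N$-action on $W'$ and the right-regular action on $k[N_r]$; this is the $\alpha=1$ instance of the statement and is immediate. For the \'etale part $H$ I would dualize the compatibility recorded just before the lemma, namely that in $W$ one has $h\circ n\circ h^{-1}=hnh^{-1}$: dualized, this asserts exactly that $\rho$ intertwines the $H$-action on $W$ with the diagonal of the restricted action on $W'$ and the conjugation action of $H$ on $k[N_r]$. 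Combining the two gives that $\rho$, hence $\Box$, is $(G,k[N_r])$-linear.

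Finally I would settle bijectivity by exhibiting the inverse
\[
\Box^{-1}\colon W'\otimes k[N_r]\to W\otimes k[N_r],\qquad
w\otimes\beta\mapsto\sum_{(w)}w_{(0)}\otimes(\Cal S w_{(1)})\beta,
\]
and checking $\Box^{-1}\Box=\id$ and $\Box\Box^{-1}=\id$ by the standard manipulation: use coassociativity to rewrite the iterated coaction, then collapse $\sum(\Cal S w_{(1)})w_{(2)}$ (respectively $\sum w_{(1)}\,\Cal S w_{(2)}$) via the antipode axiom and the counit axiom for the comodule $W$. This is the classical shearing (Hopf-module) isomorphism for the Hopf algebra $k[N]$. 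As $\Box$ is then a $k[N_r]$-linear $G$-equivariant bijection, it is an isomorphism of $(G,k[N_r])$-modules.

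I expect the only genuine obstacle to be the $H$-equivariance of $\rho$: one must keep straight that the $H$-structure on the target pairs the restricted action on $W'$ with the conjugation action on $k[N_r]$, and that dualizing $h\circ n\circ h^{-1}=hnh^{-1}$ yields precisely this intertwining. The $N$-equivariance and the bijectivity are then formal consequences of the Hopf-algebra axioms.
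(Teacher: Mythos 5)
Your proof is correct, and its substance coincides with the paper's: the same $k[N_r]$-linearity check, the same antipode-built inverse $w'\otimes\beta\mapsto\sum_{(w')}w'_{(0)}\otimes(\Cal S w'_{(1)})\beta$ verified by the standard Hopf-module (shearing) manipulation, and the same splitting of $G$-equivariance into the $N$- and $H$-parts. The one genuine organizational difference is in the equivariance step: the paper verifies $H$-linearity of $\Box$ head-on by a two-display Sweedler computation comparing $(\Box\otimes 1_{k[H]})\omega$ with $\omega\Box$, whereas you factor $\Box=(\id_{W'}\otimes\mu)\circ(\rho\otimes\id)$ and reduce everything to the $G$-linearity of the multiplication $\mu$ of the $G$-algebra $k[N_r]$ and of the coaction $\rho\colon W\to W'\otimes k[N_r]$; the latter you dispose of via coassociativity (for $N$) and the dualization of $h\circ n\circ h^{-1}=hnh^{-1}$ (for $H$). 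This buys a cleaner conceptual statement --- indeed the paper itself asserts the $G$-linearity of $\omega_W\colon W\to W'\otimes k[N]$ without proof in Section~3, so your factorization effectively proves that assertion too --- at the cost of having to be careful, as you note, that the target of $\rho$ carries the restricted $H$-action on $W'$ paired with conjugation on $k[N_r]$; making the dualization step fully explicit would land you back at essentially the paper's Sweedler computation. No gap.
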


\begin{proof}
As
\begin{multline*}
\beta\Box(w\otimes \alpha)=\beta(\sum_{(w)}w_{(0)}\otimes w_{(1)}\alpha)
=\sum_{(\beta)}\sum_{(w)}\beta_{(1)}w_{(0)}\otimes \beta_{(2)}w_{(1)}\alpha\\
=\sum_{(\beta w)}(\beta w)_{(0)}\otimes(\beta w)_{(1)}\alpha
=\Box(\beta(w\otimes\alpha)),
\end{multline*}
we have that $\Box$ is $k[N_r]$-linear.
It is easy to see that $w'\otimes\beta\mapsto \sum_{(w')}w'_{(0)}\otimes (\Cal S w'_{(1)})\beta$ is the inverse
of $\Box$, and $\Box$ is bijective.

It remains to show that the map $\Box$ is $H$-linear and $N$-linear.
As
\begin{multline*}
(\Box\otimes 1_{k[H]})\omega(w\otimes\alpha)=
(\Box\otimes 1_{k[H]})\left(\sum_{(w),(\alpha)}w_{(0)}\otimes \alpha_{(2)}\otimes w_{(1)}(\Cal S\alpha_{(1)})\alpha_{(3)}\right)\\
=
\sum_{(w),(\alpha)}w_{(0)}\otimes w_{(1)}\alpha_{(2)}\otimes w_{(2)}(\Cal S\alpha_{(1)})\alpha_{(3)}
\end{multline*}
and
\begin{multline*}
  \omega\Box(w\otimes\alpha)=\omega(\sum_{(w)}w_{(0)}\otimes w_{(1)}\alpha)
\\
  =\sum_{(w),(\alpha)}w_{(0)}\otimes w_{(3)}\alpha_{(2)}
  \otimes w_{(1)}(\Cal S w_{(2)})w_{(3)}(\Cal S\alpha_{(1)})\alpha_{(3)}
  \\
=\sum_{(w),(\alpha)}w_{(0)}\otimes w_{(1)}\alpha_{(2)}\otimes w_{(2)}(\Cal S\alpha_{(1)})\alpha_{(3)},
\end{multline*}
we have that $\Box$ is $H$-linear.
The fact that $\Box$ is $N$-linear is checked more easily, and thus $\Box$ is $G$-linear.
\end{proof}

\paragraph
Let $W$ be a $k$-vector space.
For $e\in\ZZ$, we define ${}^eW$ to be the additive group $W$ with a new $k$ action given by $\alpha\cdot w=\alpha^{p^e}w$ for
$\alpha\in k$ and $w\in W$.
For $w\in W$, we denote the element $w\in W={}^eW$ by ${}^ew$.
It is easy to see that ${}^e(-):\Mod k\rightarrow \Mod k$ is an auto-equivalence of monoidal categories.
For a $k$-algebra $B$, ${}^eB$ is a $k$-algebra, and the Frobenius map $F^e:B\rightarrow {}^eB$ is a $k$-algebra map.
If $X=\Spec B$, then we denote ${}^eX=\Spec {}^eB$.
For a $B$-module $M$, ${}^eM$ is a ${}^eB$-module in a natural way, and hence is a $B$-module.

\paragraph  
Let $f: R\rightarrow R'$ be a $k$-algebra map between $k$-algebras of finite type.
Then $f$ is \'etale if and only if ${}^eR\otimes_R R'\rightarrow {}^eR'$ given by ${}^e r\otimes x\mapsto{}^e(rx^{p^e})$
is an isomorphism \cite{Radu}.
In particular, since $H$ is \'etale over $k$, the Frobenius map $F^e:{}^eH\rightarrow H$ is an isomorphism of groups.
If $W$ is an $H$-module, then we have a homomorphism of $k$-groups $\psi:H\rightarrow \GL(W)$.
Note that ${}^e\GL(W)$ acts on ${}^eW$ via the action ${}^e\GL(W)\times {}^eW\cong{}^e(\GL(W)\times W)\rightarrow {}^eW$.
The coaction of ${}^eW$ is
\[
  {}^eW\xrightarrow{{}^e\omega}{}^e(W\otimes k[H])\cong {}^eW\otimes {}^ek[H]\xrightarrow{1\otimes F^{-e}}{}^eW\otimes k[H].
\]
In particular, if $H$ is a constant finite group, $h\in H$, $w_1,\ldots,w_n$ a $k$-basis of $W$, and $hw_j=\sum_i c_{ij}w_i$
($c_{ij}\in k$), then ${}^ew_1,\ldots,{}^ew_n$ is a $k$-basis of ${}^eW$, and $h{}^ew_j=\sum_i c_{ij}^{1/p^e}{}^ew_i$.

Although ${}^eW$ is an $H$-module again for any $H$-module $W$, it seems that there is no canonical way to make
${}^eM$ a $G$-module for a $G$-module $M$.
  
\section{$F$-signatures of the rings of invariants}

\paragraph
Let $k$ be a perfect field of characteristic $p>0$, $A=\bigoplus_{i\geq 0}A_i$ be a finitely generated commutative graded
$k$-algebra such that $A_0$ is finite over $k$.
Let $T=\bigoplus_{i\geq 0}T_i$ be a finite graded $A$-algebra which might not be commutative.
We define $\Cal C(T)$ the category of $\Bbb Q$-graded finitely generated left $T$-modules.
Let $\Theta^*(T)$ denote $(\bigoplus_{M\in\Ob(\Cal C(T))}\Bbb R\cdot[M])/([L]-[M]-[N])$,
the $\Bbb R$-vector space with the set of objects $[M]$
of $\Cal C(T)$ its basis modulo the relations $[L]-[M]-[N]$ for objects $L,M,N\in\Cal C(T)$
such that $L\cong M\oplus N$.
As the endomorphism ring of any object of $\Cal C(T)$ is a finite dimensional algebra, 
$\Theta^*(T)$ has $\Ind(\Cal C(T))$, the set of isomorphism classes of indecomposable objects of $\Cal C(T)$ as
its basis by the Krull--Schmidt theorem.
We set $\Theta^\circ(T)=\Theta^*(T)/([M]-[M(c)]\mid M\in\Cal C(T),\; c\in\Bbb Q)$.
Note that $\Theta^\circ(T)$ has $\Ind^\circ(\Cal C(T))=\Ind(\Cal C(T))/\mathord\sim$ as its basis,
where $\mathord\sim$ is the equivalence
relation of $\Ind(\Cal C(T))$ such that $[M]\sim[N]$ if and only if there exists some $c\in\QQ$ such that
$N\cong M(c)$.

\paragraph\label{normed.par}
For $\alpha=\sum_{M\in\Ind^\circ(\Cal C(T))}c_M[M]\in\Theta^\circ(T)$ ($c_M\in\RR$), we define the norm
$\norm\alpha$ of $\alpha$ by $\norm\alpha:=\sum_M|c_M|u_T(M)$, where $u_T(M)=\ell_T(M/JM)$, where
$J$ is the graded radical of $T$ (note that $T/J$ is a finite-dimensional algebra).
It is easy to see that $(\Theta^*(T),\norm-)$ is a normed space.

\paragraph
Let $B=\bigoplus_{i\geq 0}B_i$, $B_0=k$, be a
finitely generated positively graded $k$-algebra.
Let $G$ be a finite $k$-group scheme.
We denote by $|G|$ the dimension $\dim k[G]$ of $k[G]$.
Let $G$ act on $B$.
Assume that the action of $G$ on $B$ is degree-preserving.
That is, each $B_i$ is a $G$-submodule of $B$ for any $i$.
As $B$ is a module algebra over the dual Hopf algebra $k[G]^*$ of $k[G]$,
we can define the crossed product (smash product) $T=k[G]^*\#B$, see \cite[Chapter~4]{Montgomery}.
By Lemma~\ref{invariant-finite.lem}, $T$ is a finite algebra over $A:=B^G$.
We denote $\Cal C(T)$ and $\Theta^\circ(T)$ by $\Cal C(G,B)$ and $\Theta^\circ(G,B)$, respectively.
Thus an element $\alpha$ of $\Cal C(G,B)$ can be written as $\alpha=\sum_M c_M[M]$ uniquely, where
$M$ runs through $\Ind^\circ(\Cal(G,B))$.
We say that $\alpha\geq 0$ if $c_M\geq 0$ for each $M$.
We define $\Cal C^+(G,B)=\{\alpha\in\Cal C(G,B)\mid \alpha\geq 0\}$.

\paragraph
  For $M,L\in\Cal C(G,B)$, we define $\summ(M,L)$ to be the supremum of $r\in\ZZ_{\geq 0}$ such that
  $M^r$ is a direct summand of $L$.
  More generally, for $M\in\Cal C(G,B)$ and $\alpha\in\Cal C^+(G,B)$, we define $\summ(M,\alpha)$ to be
  the supremum of $r\in\ZZ_{\geq 0}$ such that $\alpha-r[M]\geq 0$.
  If $M$ is indecomposable and $\alpha=\sum_{N}c_N[N]\in\Cal C^+(G,B)$,
  then $\summ(M,\alpha):=c_M$.

  \paragraph
  Let $M,L\in\Cal C(G,B)$, and $f:M\rightarrow L$ be a $G$-linear $B$-linear mapping.
  We say that $f$ is gradable if there exists some direct decomposition $M=\bigoplus_{i=1}^r M_i$ in
  $\Cal C(G,B)$ and $c_1,\ldots,c_r\in\QQ$ such that $f:\bigoplus_{i=1}^r M_i(c_i)\rightarrow L$ is
  a morphism in $\Cal C(G,B)$ (that is, degree-preserving).

\paragraph
Let $H$ be an \'etale $k$-group scheme.
Let $\Cal C=\Cal C(H,B)$, where $B=\bigoplus_{i\geq 0}B_i$ with $B_0=k$ is a finitely generated
positively graded $k$-algebra with a degree-preserving $H$-action.
  It is easy to see that if $M\in\Cal C$, then ${}^eM\in\Cal C$, and ${}^e(-):\Cal C\rightarrow\Cal C$
  is an exact functor.
  Note also that ${}^e(M(c))\cong {}^eM(c/p^{e})$.
  So we have a well-defined
  $\Bbb R$-linear map ${}^e(-):\Theta^\circ(H,B)\rightarrow \Theta^\circ(H,B)$ given by ${}^e[M]=[{}^eM]$.

\paragraph
  Let $d=\dim B$.
  We define the Frobenius limit \cite{HS} of $\alpha\in \Theta^\circ(H,B)$ by
  $\FL \alpha=\lim_{e\rightarrow\infty}{}^e\alpha/p^{de}$, if the limit in the right-hand
  side exists.

  We define the $H$-equivariant generalized $F$-signature, denoted by
  $s_H(M,\alpha)$, to be $\lim_{e\rightarrow\infty}\summ(M,{}^e\alpha)/p^{de}$ if the limit exists.
  If the Frobenius limit $\FL(\alpha)$ exists, 
  then $s_H(M,\alpha)=\summ(M,\FL(\alpha))$.
  If $H=\{e\}$ is trivial, then we simply denote $s_{\{e\}}(M,B)$ by $s(M,B)$.

\paragraph
Let $k$ be a perfect field of characteristic $p>0$, and $G$ be a finite $k$-group scheme.

\begin{lemma}
  Assume that the action of $G$ on $B$ is small, and $B$ is a normal domain.
  \begin{enumerate}
  \item[(1)] $M\mapsto M^G$ from the category of $\Bbb Q$-graded finitely generated
    reflexive $(G,B)$-modules $\Cal R(G,B)$ to the category of $\Bbb Q$-graded finitely generated
    reflexive $B^G$-modules $\Cal R(B^G)$ is an equivalence, whose quasi-inverse is given by
    $L\mapsto (L\otimes_{B^G}B)^{**}$.
  \item[(2)] Assume that $G$ is \'etale.
    Then for an indecomposable object $M\in\Cal R(G,B)$, we have that $s_G(M,B)=s(M^G,B^G)$,
    provided $s_G(M,B)$ exists.
  \end{enumerate}
\end{lemma}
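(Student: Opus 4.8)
For part~(1), the plan is to read off the equivalence from the almost-Grothendieck theorem (Lemma~\ref{almost-Grothendieck-theorem.lem}) in the case $N=G$. By smallness, the canonical morphism $f:X=\Spec B\to Y=\Spec B^G$ is an almost principal $G$-bundle, so there is an open $V\subseteq Y$ with $\codim(Y\setminus V,Y)\geq 2$ over which $f$ is a principal $G$-bundle. Both $X$ and $Y$ are affine and normal: $B$ is a normal domain by hypothesis, and $B^G$ is integrally closed in its fraction field $Q(B^G)$, because any element of $Q(B^G)$ integral over $B^G$ is integral over the normal domain $B$ and hence lies in $B\cap Q(B^G)=B^G$. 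Since $B^G\to B$ is finite by Lemma~\ref{invariant-finite.lem}, all the hypotheses of Lemma~\ref{almost-Grothendieck-theorem.lem} hold with $N=G$ and $G/N$ trivial. That lemma then yields an equivalence $\Cal G=(-)^G:\Ref(G,B)\to\Ref(B^G)$ with quasi-inverse $\Cal F=(B\otimes_{B^G}-)^{**}$, which are exactly the two functors asserted.

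It then remains to promote this equivalence from ungraded to $\Bbb Q$-graded reflexive modules, i.e.\ to the categories $\Cal R(G,B)$ and $\Cal R(B^G)$. I would note first that $(-)^G$ and $(B\otimes_{B^G}-)^{**}$ preserve the $\Bbb Q$-grading and commute with the shift functors, since $(M(c))^G=(M^G)(c)$ and analogously for $\Cal F$. The unit and counit furnished by Lemma~\ref{almost-Grothendieck-theorem.lem} are given by the explicit, manifestly degree-preserving formulas recorded there (the counit is the double dual of $\alpha\otimes m\mapsto\alpha m$, and the unit is the displayed composite). A degree-preserving map between graded modules whose underlying ungraded map is an isomorphism is itself a graded isomorphism; hence the unit and counit remain isomorphisms in the graded categories, proving~(1).

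For part~(2), now with $G$ \'etale, I would first record that $\dim B^G=\dim B=d$, as $B^G\to B$ is finite, so that the normalizing factor $p^{de}$ is the same in both generalized $F$-signatures. The crucial point is the identity $({}^eB)^G={}^e(B^G)$ of $\Bbb Q$-graded $B^G$-modules. As recalled at the end of section~2, for \'etale $G$ the $G$-action on ${}^eB$ satisfies $g\cdot{}^eb={}^e(gb)$; this is transparent after base change to $\bar k$, where $G$ becomes a constant finite group and the twist multiplies the structure constants by $c\mapsto c^{1/p^e}$, and it shows that the formation of $G$-invariants commutes with the Frobenius twist. Moreover ${}^eB$ is a finitely generated reflexive $(G,B)$-module: it is finite over $B$ because $B$ is $F$-finite, and it is reflexive as a $B$-module because it is a normal domain (hence $(S_2)$) that is module-finite over the normal domain $B$. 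Thus ${}^eB\in\Cal R(G,B)$, and every indecomposable summand of ${}^eB$ is again reflexive, so the multiplicity $\summ(M,{}^e[B])$ may be computed inside $\Cal R(G,B)$.

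Finally I would feed ${}^eB$ through the equivalence of~(1). Being an additive equivalence of Krull--Schmidt categories that commutes with grading shifts, $\Cal G$ descends to $\Theta^\circ$ and carries each indecomposable summand of ${}^eB$ to an indecomposable summand of $({}^eB)^G={}^e(B^G)$ with the same multiplicity, sending the indecomposable $M$ to $M^G$. Hence $\summ(M,{}^e[B])=\summ(M^G,[({}^eB)^G])=\summ(M^G,{}^e[B^G])$ for every $e$, and dividing by $p^{de}$ and letting $e\to\infty$ gives $s_G(M,B)=s(M^G,B^G)$ whenever $s_G(M,B)$ exists. I expect the main obstacle to be the careful bookkeeping in the two structural inputs---verifying the degree-preservation of the unit and counit in~(1), and pinning down $({}^eB)^G={}^e(B^G)$ together with the claim that the $B^G$-module structure on the left coincides with the Frobenius twist on the right---after which the multiplicity comparison is purely formal.
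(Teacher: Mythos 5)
Your proof is correct and follows essentially the same route as the paper: part (1) is obtained as the ($\Bbb Q$-graded) specialization of Lemma~\ref{almost-Grothendieck-theorem.lem} to the case $N=G$, and part (2) combines that equivalence with the identification $({}^eB)^G\cong{}^e(B^G)$ for \'etale $G$. The only difference is cosmetic: you compare $\summ(M,{}^eB)$ with $\summ(M^G,{}^e(B^G))$ at each finite level $e$ and then take limits, whereas the paper passes to $\FL(B)$ and reads off coefficients via the continuity of $[(-)^G]$; your version adheres slightly more faithfully to the stated hypothesis that only $s_G(M,B)$ (rather than the full Frobenius limit of $B$) is assumed to exist.
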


\begin{proof}
  (1).
  This is an obvious extension of Lemma~\ref{almost-Grothendieck-theorem.lem}.
  
  (2).
  Note that the $G$-invariance $(-)^G$ induces an $\Bbb R$-linear map $[(-)^G]:\Theta^\circ(G,B)
  \rightarrow \Theta^\circ(B^G)$.
  By (1), it is an isomorphism from the subspace $\Theta_{\Ref}^\circ(G,B)$
  of $\Theta^\circ(G,B)$ generated by the reflexive
  objects to the subspace $\Theta_{\Ref}^\circ(B^G)$ of $\Theta^\circ(B^G)$
  generated by the reflexive objects.
  Note that $({}^eB)^G\cong {}^e(B^G)$.
  As $[(-)^G]$ is continuous,
  \[
  \FL(B^G)=\lim_{e\rightarrow\infty}{}^e[B^G]/p^{de}=(\lim_{e\rightarrow \infty}{}^e[B]/p^{de})^G
  =\FL(B)^G,
  \]
  where $d=\dim B$.
  We can write $\FL(B)=\sum_{L\in\ind^\circ(B^G)}c_L[L]$ ($c_L\in\RR$).
  Then $\FL(B^G)=\FL(B)^G=\sum_L c_L[L^G]$.
  By (1), we have that $s(M^G,B^G)=c_M=s_G(M,B)$.
\end{proof}

\paragraph
Let $G$ be a finite group scheme over $k$, and $V$ a finite-dimensional $G$-module.
Let $S=\Sym V^*$ be the symmetric algebra of $V$, and we identify $V$ with $\Spec S$.
We have that $G=G^\circ \rtimes G\red$.
By \cite[(10.8)]{Hashimoto2}, the action of $G$ on $V$ is small if and only if the action of
$G^\circ$ on $V$ is small and the action of $G\red$ on $V\aq G^\circ=\Spec S^{G^\circ}$ is small.

In what follows, we assume that the action of $G$ is small.
In particular, $G\rightarrow\GL(V)$ is a closed immersion.

\paragraph
  Let $k$ be a perfect field of characteristic $p>0$,
  $G$ be an \'etale $k$-group scheme over $k$,
  and $V$ a finite-dimensional $G$-module.
  Let $S=\Sym V$ be the symmetric algebra of $V$, and we assume that $S$ is graded so that
  each element of $V$ is homogeneous of degree one.
  Assume that the action of $G$ on $S$ is small.
  Let $\tilde G=G\times\Bbb G_m$.
  Let $X=\Spec S$.
  Then $\tilde G$ acts on $X$.
  Let $U$ be the \'etale locus of the quotient map $\pi:X\rightarrow Y=\Spec S^G$.
  Then $U$ is a dense open subset of $X$.

\begin{lemma}\label{split-mono.lem}
  There is a split monomorphism
  $W\hookrightarrow S$ of $\tilde G$-modules,
  where $W$ is a $\tilde G$-module whose underlying
  $G$-module is isomorphic to $k[G]$.
\end{lemma}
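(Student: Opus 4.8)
\emph{Plan.} The decisive feature of the regular representation is that $k[G]$ is simultaneously a projective and an injective $G$-module. It is injective because, for any affine $G$, the coordinate ring $k[G]$ is a cofree comodule over itself; and it is projective because $G$ is finite, so the dual Hopf algebra $k[G]^*$ is a finite-dimensional Frobenius algebra over which the regular module is free. Hence any $G$-module monomorphism out of $k[G]$, or any $G$-module epimorphism onto $k[G]$, automatically splits. The plan is therefore to (i) realize $k[G]$ as a direct summand of $S$ in the category of \emph{ungraded} $G$-modules, and then (ii) promote this to a graded (that is, $\tilde G$-) direct summand whose underlying $G$-module is still $k[G]$.

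For (i), I would reduce to the algebraically closed case and use a point with trivial stabilizer. Over $\bar k$ the group $G$ becomes a constant finite group $\Gamma$ acting faithfully on $V\otimes_k\bar k$ (faithfulness is part of smallness), and a faithful linear action of a finite group on the irreducible variety $X\otimes_k\bar k$ is generically free, since the fixed locus of each nontrivial element is a proper linear subspace. Choosing a $\bar k$-point $x$ with trivial stabilizer, its orbit $\Gamma x$ consists of $|G|$ distinct points, and evaluation of functions on this orbit gives a $\Gamma$-equivariant surjection $S\otimes_k\bar k\twoheadrightarrow\bar k[\Gamma x]\cong\bar k[\Gamma]$ of ungraded modules. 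Since $\bar k[\Gamma]$ is projective this splits, so $\bar k[\Gamma]$ is a summand of $S\otimes_k\bar k$; as $k[G]\otimes_k\bar k\cong\bar k[\Gamma]$ and the whole construction is $\mathop{\mathrm{Gal}}(\bar k/k)$-equivariant, faithfully flat descent yields that $k[G]$ is a direct summand of $S$ as ungraded $G$-modules. (Alternatively one may stay over $k$: smallness makes the generic fibre of $\pi$ a $G$-torsor, so $\operatorname{Frac}(S)\cong k[G]\otimes_k\operatorname{Frac}(S^G)$ as $(G,\operatorname{Frac}(S^G))$-modules by the normal basis theorem; clearing denominators by an element of $S^G$ produces a $G$-monomorphism $k[G]\hookrightarrow S$, which splits because $k[G]$ is injective.)

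For (ii), I would pass through the grading by Krull--Schmidt. Write $k[G]\cong\bigoplus_i P_i^{m_i}$ as ungraded $G$-modules, with $m_i=\dim_kV_i/\dim_k\End_GV_i$. Since $S=\bigoplus_d S_d$ with each $S_d$ finite-dimensional, every indecomposable $G$-module summand of $S$ is a summand of a finite partial sum, hence of some single $S_d$; therefore the ungraded multiplicity of $P_i$ in $S$ equals $\sum_d(\text{multiplicity of }P_i\text{ in }S_d)$, which by (i) is at least $m_i$ for every $i$. I then select, for each $i$, a total of $m_i$ copies of $P_i$ distributed among the various $S_d$ (no more than are available in each degree). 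Their direct sum $W$ is a graded direct summand of $S=\bigoplus_d S_d$, hence a $\tilde G$-direct summand, and its underlying $G$-module is $\bigoplus_i P_i^{m_i}\cong k[G]$. Thus $W\hookrightarrow S$ is the required split monomorphism of $\tilde G$-modules.

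The main obstacle is step (i): locating the regular representation inside $S$ and splitting it off. This is exactly where the smallness hypothesis is used, as it forces the action to be generically free, so that $k[G]$ embeds in $S$; the Frobenius/cofree properties of $k[G]$ then supply the splitting for free. Step (ii) is routine graded bookkeeping, and the only subtlety for non-constant \'etale $G$ is the descent in (i), which is harmless because $k[G]\otimes_k\bar k\cong\bar k[\Gamma]$ and every construction above is Galois-equivariant.
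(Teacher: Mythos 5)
Your argument is correct in outline and shares the paper's skeleton: first realize $k[G]$ as a direct summand of $S$ in the category of ungraded $G$-modules (using that a small action is generically free, and that $k[G]$ is both projective and injective because $k[G]^*$ is a finite-dimensional Frobenius algebra), then use Krull--Schmidt on the finite-dimensional graded pieces $S_d$ to promote this to a graded, i.e.\ $\tilde G$-equivariant, summand; your step (ii) is essentially verbatim the paper's final paragraph. Where you genuinely diverge is in how the absence of a $k$-rational free orbit is handled. The paper stays over $k$: if the \'etale locus $U$ has no $k$-point then $k=\Bbb F_q$ is finite, and one takes a point over $k_1=\Bbb F_{q^\ell}$ with $\ell$ a prime exceeding $\dim_k k[G]$, so that $k_1[G]\cong k[G]^\ell$ as $G$-modules and the resulting surjection $S\twoheadrightarrow k_1[G]$ already yields a surjection onto $k[G]$. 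Your first route passes to $\bar k$ and descends, and here your justification is the one weak point: the chosen free orbit is not $\Gal(\bar k/k)$-equivariant, and ``being a direct summand'' is not descended by faithfully flat descent of the splitting itself (the splitting is extra data). The correct tool is the Noether--Deuring theorem for modules over the finite-dimensional algebra $k[G]^*$, applied after first cutting the summand down into a finite partial sum $\bigoplus_{i\le r}S_i$; with that substitution the argument goes through. Your alternative route---the generic fibre of $\pi$ is a $G$-torsor over the fraction field of $S^G$, hence isomorphic to $k[G]$ extended to that field as a comodule by the normal basis theorem, after which one clears a $G$-invariant denominator from $S^G$ and splits the resulting monomorphism by injectivity of $k[G]$---avoids the descent issue entirely and is arguably slicker than the paper's finite-field trick, at the cost of invoking the normal basis theorem for torsors under \'etale group schemes rather than only for constant groups.
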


\begin{proof}
  Let $g=\dim k[G]$.
  If there is no $k$-rational point of $U$, then since $U$ is a dense open subset
  of an affine space, $k=\Bbb F_q$ is a finite field.
  In this case, we can take a prime number $\ell>g$
  such that $U$ has a $k_1:=\Bbb F_{q^\ell}$-valued point $x$.
  Then the image $y$ of $x$ by $\pi:X\rightarrow Y$ is again a $k_1$-valued point.
  As $G\times U\rightarrow U\times_Y U$ is an isomorphism, we have that $G\times\{x\}\rightarrow Gx$
  is an isomorphism.
  Or equivalently, $S\xrightarrow \omega S\otimes k[G]\rightarrow k_1[G]$ is surjective.
  As $k_1[G]$ as a $G$-module is nothing but $k[G]^\ell$, there is a surjective $G$-linear map
  $\rho:S\rightarrow k[G]$.
  As $k[G]^*$ is a finite dimensional Hopf algebra, it is a Frobenius algebra \cite[Theorem~3.6]{SY}.
  So the injective $G$-module ($kG=k[G]^*$-module)
  $k[G]$ is also projective.
  So $\rho$ splits, and $k[G]$ is a summand of $S$ as a $G$-module.
  So there exists some $r\geq 0$ such that $k[G]$ is a summand of $\bigoplus_{i=0}^r S_i$,
  where $S_r$ denotes the homogeneous component of $S$ of degree $r$.
  By the Krull--Schmidt theorem, $\bigoplus_{i=0}^r S_i$ has a graded summand which is isomorphic
  to $k[G]$ as a $G$-module, and this is what we wanted to prove.
\end{proof}

\paragraph
Just mimicking the proof in \cite[section~4]{HS},
we can extend \cite[Theorem~4.13]{HS} to the actions
of \'etale group schemes.
Lemma~\ref{split-mono.lem} above can be used to extend \cite[Lemma~4.11]{HS} to the
case that $G$ is a general \'etale group scheme.

\begin{proposition}\label{etale-case.prop}
  Let $k$ be a perfect field of characteristic $p>0$,
  $G$ be an \'etale $k$-group scheme, and $V$ be a finite-dimensional $G$-module.
  Let $S=\Sym V$ be the symmetric algebra of $V$, and we assume that $S$ is graded so that
  each element of $V$ is homogeneous of degree one.
  Let $M$ be a $\Bbb Q$-graded $S$-finite $S$-free $(G,S)$-module.
  Let $k=V_1,\ldots,V_r$ be the simple $G$-modules, and let $P_i$ be the
  projective cover of $V_i$.
  Then we have
\[
\FL(M)=\frac{\rank_S M}{\dim_k k[G]}[k[G]\otimes_k S].
\]
In particular, we have
\[
\FL(M^G)=\frac{\rank_S M}{\dim_k k[G]}[S]=
\frac{\rank_S M}{\dim_k k[G]}
\sum_{i=1}^r \frac{\dim V_i}{\dim \End_G V_i} [(P_i\otimes_k S)^G].
\]
\end{proposition}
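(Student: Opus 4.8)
The plan is to transcribe the argument of \cite[section~4]{HS} to the étale setting and then pass to $G$-invariants. Because $G$ is étale, the relative Frobenius identifies ${}^eG$ with $G$, so (as recalled before the statement) ${}^e(-)$ is a well-defined exact functor on $\Cal C(G,S)$ and ${}^eM$ is a genuine $(G,S)$-module; since $M$ is $S$-free of rank $n:=\rank_S M$ and ${}^eS$ is $S$-free of rank $p^{de}$ (here $d=\dim V$ and $k$ is perfect), ${}^eM$ is $S$-free of rank $np^{de}$. The distinguished free object is the regular module $k[G]\otimes_k S$, of $S$-rank $\dim_k k[G]$, and I write $a_e$ for the largest $r$ such that $(k[G]\otimes_k S)^r$ is a direct summand of ${}^eM$. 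The sole point in \cite[section~4]{HS} at which $G$ is assumed to be a \emph{constant} finite group is the production of a free $G$-module summand of $S$; that input is now supplied in full generality by Lemma~\ref{split-mono.lem}.

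That $k[G]\otimes_k S$ is the correct free object, with expected multiplicity $np^{de}/\dim_k k[G]$, is explained by the generic behaviour, which is where the smallness hypothesis enters. Writing $K=Q(S)$, on the free locus $\Spec K\to\Spec K^G$ is a $G$-torsor, so $K/K^G$ is $G$-Galois and every finite-dimensional $(G,K)$-module is a sum of copies of the regular module $k[G]\otimes_k K$; applied to ${}^eM\otimes_SK$ this gives ${}^eM\otimes_SK\cong(k[G]\otimes_kK)^{b_e}$ with $b_e=np^{de}/\dim_k k[G]$. Comparing $S$-ranks then yields at once the upper bound $a_e\le np^{de}/\dim_k k[G]$.

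For the matching lower bound I would invoke Lemma~\ref{split-mono.lem}, which exhibits a graded $G$-module summand of $S$ isomorphic to $k[G]$; this free $G$-summand is exactly the hypothesis under which the transcription of \cite[Lemma~4.11]{HS} and \cite[Theorem~4.13]{HS} manufactures copies of $k[G]\otimes_k S$ inside ${}^eM$, giving $a_e\ge np^{de}/\dim_k k[G]-o(p^{de})$. Writing ${}^eM\cong(k[G]\otimes_k S)^{a_e}\oplus N_e$ with $N_e$ having no direct summand isomorphic to $k[G]\otimes_k S$, the two bounds give $\dim_k k[G]\cdot a_e=np^{de}-o(p^{de})$, hence $\rank_S N_e=o(p^{de})$; by the norm estimate of \cite[section~4]{HS} the contribution of $N_e$ to $\norm{{}^e[M]/p^{de}}$ tends to $0$. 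Thus the Frobenius limit exists and equals the first asserted value, $\FL(M)=\frac{\rank_S M}{\dim_k k[G]}[k[G]\otimes_k S]$.

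It remains to apply the invariant functor. One checks as in the $B=S$ case that $({}^eM)^G\cong{}^e(M^G)$ for étale $G$, and $[(-)^G]$ is $\RR$-linear and continuous on the reflexive part of $\Theta^\circ(G,S)$ (where all of the modules here live, under $\Cal R(G,S)\simeq\Cal R(S^G)$), so it commutes with $\FL$; together with the identification $(k[G]\otimes_k S)^G\cong S$ as an $S^G$-module (this is the module $S''$ of the main theorem) this yields $\FL(M^G)=\frac{\rank_S M}{\dim_k k[G]}[S]$. Finally, the regular representation decomposes as a left $G$-module as
\[
k[G]\cong\bigoplus_{i=1}^r P_i^{\,\dim_k V_i/\dim_k\End_G V_i},
\]
so tensoring with $S$ and taking $G$-invariants summand by summand turns $[S]$ into $\sum_{i=1}^r\frac{\dim V_i}{\dim\End_G V_i}[(P_i\otimes_k S)^G]$, as claimed. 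The main obstacle is exactly the lower bound when $k$ is not algebraically closed and $G$ is not a constant group: one cannot simply exhibit a $k$-point of the free locus, and this is what Lemma~\ref{split-mono.lem} circumvents, by base-changing to a finite extension of degree prime to $\dim_k k[G]$ to find such a point, producing a surjection $S\twoheadrightarrow k[G]$, and splitting it via the projectivity of $k[G]$ as a $G$-module. Beyond this I expect only the routine bookkeeping of the gradings and of the $o(p^{de})$ error terms, exactly as in \cite[section~4]{HS}.
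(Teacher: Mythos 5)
Your proposal is correct and follows essentially the same route as the paper: the paper's own ``proof'' of Proposition~\ref{etale-case.prop} is precisely the remark that one mimics \cite[section~4]{HS}, with Lemma~\ref{split-mono.lem} supplying the free $G$-module summand of $S$ that replaces the constant-group input of \cite[Lemma~4.11]{HS}, and you identify exactly this strategy, the same key lemma, and the same generic-rank upper bound and $o(p^{de})$ bookkeeping. The passage to invariants via continuity of $[(-)^G]$ and the decomposition $k[G]\cong\bigoplus_i P_i^{\dim_k V_i/\dim_k\End_G V_i}$ likewise matches the paper's treatment.
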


Using Proposition~\ref{etale-case.prop} above, we can prove the following.

\begin{proposition}\label{etale-case2.prop}
  Let the notation be as in Proposition~\ref{etale-case.prop}.
  Let $L$ be a graded $(G,S)$-submodule of $M$, and assume that $L$ is reflexive as an $S$-module.
  Then
  \[
  \FL(L)=\frac{\rank_S L}{\dim_k k[G]}[k[G]\otimes_k S].
  \]
  In particular, 
we have
\[
\FL(L^G)=\frac{\rank_S L}{\dim_k k[G]}[S]=
\frac{\rank_S L}{\dim_k k[G]}
\sum_{i=1}^r \frac{\dim V_i}{\dim \End_G V_i} [(P_i\otimes_k S)^G].
\]
\end{proposition}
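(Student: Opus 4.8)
The plan is to reduce the reflexive submodule $L$ to the free module $M$ for which Proposition~\ref{etale-case.prop} already gives the Frobenius limit. The guiding principle is that passing to Frobenius limits only sees codimension-one information: two $(G,S)$-modules that agree in codimension one should have the same Frobenius limit once we divide by $p^{de}$. Since $L$ is reflexive and $M$ is free (hence reflexive), and $L\subset M$ is a graded $(G,S)$-submodule, the quotient $M/L$ is supported in codimension at least one; I would first arrange that its support has codimension at least two, or more precisely isolate the codimension-one part and show it contributes nothing in the limit.

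Concretely, first I would record that $\rank_S L=\rank_S M'$ for the free module $M'$ I intend to compare against — indeed, since $L$ is a submodule of full or some fixed rank inside a free module, I can choose a free graded $(G,S)$-module $M'$ with $\rank_S M'=\rank_S L$ together with graded $(G,S)$-maps exhibiting $L$ as agreeing with $M'$ away from a closed subset of codimension $\geq 2$. Because the action of $G$ is small, the non-free locus of the problem lives over $F\subset V\aq G$ of codimension two or more, and on the complement $L$ and $M'$ become isomorphic as $(G,S)$-modules by Lemma~\ref{almost-Grothendieck-theorem.lem} applied to the principal-bundle locus. Thus I have an exact sequence (or a pair of maps) whose kernels and cokernels are supported in codimension $\geq 2$.

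The heart of the argument is then a continuity/vanishing estimate: applying the Frobenius twist ${}^e(-)$ to the comparison between $L$ and $M'$ and dividing by $p^{de}$, the error terms coming from the codimension-$\geq 2$ support contribute modules whose norm $\norm{{}^e(M/L)}/p^{de}$ tends to $0$. This is exactly the point where one uses that $u_T(-)=\ell_T(-/J-)$ grows like the rank times $p^{de}$ in codimension zero, while a module supported in codimension $\geq 2$ contributes length of order at most $p^{(d-2)e}$, which is negligible after dividing by $p^{de}$. Hence
\[
\FL(L)=\lim_{e\to\infty}\frac{{}^e[L]}{p^{de}}=\lim_{e\to\infty}\frac{{}^e[M']}{p^{de}}=\FL(M')
=\frac{\rank_S M'}{\dim_k k[G]}[k[G]\otimes_k S]
=\frac{\rank_S L}{\dim_k k[G]}[k[G]\otimes_k S],
\]
where the third equality is Proposition~\ref{etale-case.prop}. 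The in-particular statement for $L^G$ then follows by applying the invariance functor $(-)^G$, which is continuous on the relevant normed space and carries $[k[G]\otimes_k S]$ to $[S]=\sum_i \frac{\dim V_i}{\dim\End_G V_i}[(P_i\otimes_k S)^G]$, precisely as in Proposition~\ref{etale-case.prop}.

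I expect the main obstacle to be the codimension estimate on the error term: one must verify that the Frobenius twists ${}^e(M/L)$, or the kernel/cokernel of the comparison map, are genuinely supported in codimension $\geq 2$ uniformly in $e$, and that their contribution to the norm is $o(p^{de})$. The subtlety is that ${}^e(-)$ need not preserve the $(G,S)$-module structure in an obvious way (as noted in the excerpt, there is no canonical $G$-structure on ${}^eM$ for a $G$-module $M$), so the estimate must be carried out after restricting to $H=G\red$ étale, where ${}^e(-)$ does act on $\Cal C(H,B)$; the smallness hypothesis is what guarantees the bad locus sits in codimension $\geq 2$ and keeps the error negligible in the limit.
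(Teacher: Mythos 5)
Your central construction does not exist: in general there is no free graded $(G,S)$-module $M'$ with $\rank_S M'=\rank_S L$ that agrees with $L$ away from a closed subset of codimension $\geq 2$. Both $L$ and $M'$ are reflexive, and a finitely generated reflexive module over a normal domain is recovered from its behaviour in codimension one (it equals $\bigcap_{\height P=1}L_P$, as is used in the proof of Lemma~\ref{almost-Grothendieck-theorem.lem}); hence such an agreement would force $L\cong M'$ as graded $(G,S)$-modules, i.e.\ $L$ would itself be free. Under the equivalence of Lemma~\ref{almost-Grothendieck-theorem.lem} that would say every reflexive $A$-module is of the form $(W\otimes_kS)^G$, which is false in general and would make the proposition contentless. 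The appeal to the principal-bundle locus is a non sequitur: over $U$ the equivariant modules correspond to modules over the quotient, and $L|_U$ corresponds to the generally non-free reflexive sheaf $L^G$ there; smallness does not trivialize it. What actually exists is only a gradable injection $S^{\ell}\hookrightarrow L$ (induced from $A^{\ell}\hookrightarrow L^G$) whose cokernel is torsion, i.e.\ supported in codimension $\geq 1$, not $\geq 2$. Your $o(p^{de})$ estimate for codimension-$\geq 2$ supports therefore never gets to be applied, and controlling this codimension-one discrepancy is precisely where the content of the proof lies. (A side remark: since $G$ is \'etale in this proposition, ${}^e(-)$ already acts on $\Cal C(G,S)$, so the worry about restricting to $G\red$ is moot here.)

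For comparison, the paper's proof does start from the injection $h:S^{\ell}\to L$, but instead of a continuity estimate it argues directly: it composes ${}^eS^{\ell}\hookrightarrow{}^eL\hookrightarrow{}^eM\to(k[G]\otimes_kS)^{\gamma_e}\to k[G]^{\gamma_e}$, where the middle arrow comes from Proposition~\ref{etale-case.prop} applied to the free module $M$; it uses the projectivity of $k[G]$ and the injectivity of $k[G]\otimes_kS$ in the exact category of $S$-free objects to split $(k[G]\otimes_kS)^{\mu_e}$ off ${}^eL$; and it pins down $\lim_e\mu_e/p^{de}=\ell/g$ by Hilbert--Kunz multiplicity counts applied to $\Coker\kappa$ and to the kernels $I_e$ and their injective hulls $J_e$. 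If you wish to salvage a continuity argument along your lines, you would need a lemma bounding $\norm{[{}^eL]-[{}^eS^{\ell}]}$ by a constant times the number of generators of ${}^e(L/S^{\ell})$, which is $O(p^{(d-1)e})$ and hence negligible; no such lemma is established in this paper, and proving it in the equivariant graded setting is an argument of substance comparable to the one the paper actually gives.
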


\begin{proof}
  We set $d=\dim_k V$, $g=\dim_kk[G]$, $m=\rank_SM$ and $\ell=\rank_S L$.
  For $e\geq 1$, we set $\gamma_e:=\summ(k[G]\otimes_k S,{}^eM)$.
  By Proposition~\ref{etale-case.prop}, we have that $\lim_{e\rightarrow \infty}\gamma_e/p^{de}=m/g$.

  Let $A=S^G$.
  Then there is a gradable injective $A$-linear mapping $A^\ell\rightarrow L^G$, which induces a
  gradable injective $(G,S)$-linear mapping $h: S^\ell\rightarrow L$.
  For each $e\geq 1$, consider the composite
  \[
    \delta_e: {}^e S^\ell \xrightarrow {{}^eh} {}^eL \hookrightarrow {}^eM\rightarrow (k[G]\otimes_k S)^{\gamma_e}
    \rightarrow (k[G]\otimes_k S)^{\gamma_e}/\fm(k[G]\otimes_k S)^{\gamma_e}\cong k[G]^{\gamma_e},
  \]
  where $\fm=S_+$ is the irrelevant ideal.
  Let $W_e:=\Image\delta_e$, and we write $W_e=k[G]^{\mu_e}\oplus U_e$ such that $U_e$ does not have $k[G]$
  as a direct summand.
  Then since $k[G]$ is a projective $G$-module,
  the surjective map ${}^e S^\ell\xrightarrow{\delta_e} W_e\rightarrow k[G]^{\mu_e}$ has a gradable $G$-linear splitting
  $\psi_e: k[G]^{\mu_e}\rightarrow {}^eS^\ell$.
  This induces a unique gradable $(G,S)$-linear map $\tilde\psi_e: (k[G]\otimes S)^{\mu_e}\rightarrow {}^eS^\ell$,
  and
  \[
  \lambda_e: (k[G]\otimes_k S)^{\mu_e}\xrightarrow{\tilde\psi_e}{}^eS^\ell \rightarrow{}^eL\rightarrow  {}^eM\rightarrow (k[G]\otimes_k S)^{\gamma_e}
  \]
  induces a split monomorphism $\bar\lambda_e=\lambda\otimes S/\fm: k[G]^{\mu_e}\rightarrow k[G]^{\gamma_e}$.
  Then it is easy to see that $\lambda_e$ is 
  injective, and the cokernel of $\lambda_e$ is $S$-free.
  As $k[G]\otimes_k S$ is
  an injective object in the exact category of $S$-free modules in $\Cal C(G,S)$,
  it follows that $\lambda_e$ is a split monomorphism in $\Cal C(G,S)$.
  This shows that $\summ(k[G]\otimes_k S,{}^eL)\geq \mu_e$.

  So it suffices to prove that $\lim_{e\rightarrow\infty}\mu_e/p^{de}=\ell/g$.

  Let $\kappa$ denote the composite map $S^\ell\xrightarrow h L\rightarrow M$, and $Q:=\Coker\kappa$.
  Then
\begin{multline*}
  \lim_{e\rightarrow\infty}\mu_S(\Image {}^e\kappa)/p^{de}
  =
  \lim_{e\rightarrow\infty}\mu_S({}^eM)/p^{de}
  -
  \lim_{e\rightarrow\infty}\mu_S({}^eQ)/p^{de}\\
  =\eHK^S(M)-\eHK^S(Q)=\rank_S M-\rank_S Q=\rank_S S^\ell=\ell,
\end{multline*}
  where $\mu_S$ deontes the number of generators, and $\eHK^S$ denotes the Hilbert--Kunz multiplicity
  (as $S$ is the polynomial ring, $\eHK^S=\rank_S$).
  Note that
\[
  0\leq \mu_S(\Image{}^e\kappa)-\dim_k W_e\leq \rank_S{}^eM-\rank_S(k[G]\otimes_k S)^{\gamma_e}
  =p^{de}(m-g\gamma_e).
  \]
  So $\lim_{e\rightarrow \infty}(\mu_S(\Image{}^e\kappa)-\dim_k W_e)/p^{de}=0$.
  This shows $\lim_{e\rightarrow\infty}\dim_k W_e/p^{de}=\ell$.

  We can write ${}^eS=(k[G]\otimes S)^{\rho_e}\oplus Z_e$ such that $Z_e$ does not have a summand isomorphic
  to $k[G]$.
  Note that $\lim_{e\rightarrow\infty}\rho_e/p^{de}=1/g$.
  The restriction of $\delta_e:{}^eS^\ell\rightarrow W_e$ to $(k[G]\otimes S)^{\rho_e\ell}$ induces a $G$-linear map
  \[
  \eta_e: k[G]^{\rho_e\ell}\cong
  (k[G]\otimes S)^{\rho_e\ell}/\fm(k[G]\otimes S)^{\rho_e\ell}\rightarrow W_e.
  \]
  Let $Y_e$ be its image.
  Then $0\leq \dim_k W_e-\dim_k Y_e\leq\ell\rank_S Z_e=\ell (p^{de}-g\rho_e)$.
  So $\lim_{e\rightarrow\infty}\frac{1}{p^{de}}(\dim_k W_e-\dim_k Y_e)=0$.
  It follows that $\lim_{e\rightarrow\infty}\frac{1}{p^{de}}\dim_k Y_e=\ell$.
  Let $I_e$ be the kernel of $\eta_e$, and $J_e$ be its injective hull, which is a $G$-submodule of $k[G]^{\rho_e\ell}$.
  As $\dim J_e\leq g\dim I_e$, we have that $\dim J_e/p^{de}\rightarrow 0$.
  So when we set
\[
\tau_e:=\max\{r\mid \text{$k[G]^r$ is a submodule of $k[G]^{\rho_e\ell}/J_e$}\},
\]
we have that
  $\lim_{e\rightarrow\infty}\tau_e/p^{de}=\ell/g$.
  As $\tau_e\leq \mu_e\leq \dim W_e/g$, we have that $\lim_{e\rightarrow\infty}\mu_e/p^{de}=\ell/g$.
  This is what we wanted to prove.
\end{proof}

\begin{theorem}\label{main.thm}
  Let $k$ be a perfect field of characteristic $p>0$,
  $G$ be a finite $k$-group scheme over $k$,
  and $V$ a finite-dimensional $G$-module.
  Let $S=\Sym V$ be the symmetric algebra of $V$, and we assume that $S$ is graded so that
  each element of $V$ is homogeneous of degree one.
  Assume that the action of $G$ on $S$ is small.
  %, and the identity component $G^\circ$ of
  %$G$ is linearly reductive.
  Let $M$ be a $\Bbb Q$-graded $S$-finite $S$-free $(G,S)$-module,
  and $L$ be its graded $(G,S)$-submodule which is reflexive as an $S$-module.
  Let $k=V_1,\ldots,V_r$ be the simple $G$-modules, and let $P_i$ be the projective cover of $V_i$.
  Then we have
\[
\FL(L^G)=\frac{\rank_S L}{\dim_k k[G]}[S'']=
\frac{\rank_S L}{\dim_k k[G]}
\sum_{i=1}^r \frac{\dim V_i}{\dim \End_G V_i} [(P_i\otimes_k S)^G],
\]
where $S''=(S\otimes_kk[G])^G$ is $S$ viewed as an $A$-module, where $A=S^G$.
\end{theorem}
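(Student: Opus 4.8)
The plan is to reduce to the \'etale case of Proposition~\ref{etale-case2.prop} through the semidirect decomposition $G=N\rtimes H$ with $N=G^\circ$ infinitesimal and $H=G\red$ \'etale. Set $B=S^N$ and $A=B^H=S^G$. By the smallness criterion \cite{Hashimoto2}, smallness of the $G$-action is equivalent to smallness of the $N$-action on $V$ together with smallness of the $H$-action on $\Spec B$, and both halves enter. Since $B$ is a normal domain and $\Spec S\to\Spec B$ is an almost principal $N$-bundle, Lemma~\ref{almost-Grothendieck-theorem.lem} gives a rank-preserving equivalence $(-)^N\colon\Ref(G,S)\xrightarrow{\sim}\Ref(H,B)$ with $L\mapsto L^N$, $\rank_B L^N=\rank_S L=\ell$, and $L^G=(L^N)^H$; smallness of $H$ on $B$ makes $(-)^H\colon\Ref(H,B)\xrightarrow{\sim}\Ref(A)$ an equivalence as well, by the lemma comparing $s_H$ and $s$ for small actions of \'etale groups. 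Because $F^e\colon{}^eH\to H$ is an isomorphism, ${}^e(-)$ commutes with $(-)^H$, so ${}^e(L^G)\cong({}^e(L^N))^H$ as $A$-modules; as $[(-)^H]$ is continuous for the norm of (\ref{normed.par}), this reduces the problem to computing the $H$-equivariant Frobenius limit $\FL(L^N)=\lim_e{}^e[L^N]/p^{de}$ in $\Theta^\circ(H,B)$ and then applying $[(-)^H]$.

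The remaining task is to run the Frobenius-counting argument of Proposition~\ref{etale-case2.prop} in the $H$-equivariant setting over $B$, but tracking the full group $G$ through the equivalence $(-)^N$. Two inputs make this possible. First, an extension of Lemma~\ref{split-mono.lem} to the present $G$ (which is a closed subgroup scheme of $\GL(V)$ by smallness, with $k[G]^*$ a Frobenius algebra) provides a split $G$-monomorphism $k[G]\hookrightarrow S$; under $(-)^N$ the injective object $k[G]\otimes_k S$ of Proposition~\ref{etale-case2.prop} corresponds to $(k[G]\otimes_k S)^N$, which by Lemma~\ref{almost-Grothendieck-theorem.lem}\,(3) has $B$-rank $\dim_k k[G]$. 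Second, Lemma~\ref{infinitesimal-invariant-finite.lem} furnishes $e_0$ with $S^{p^{e_0}}\subset B\subset S$, so $S$ is finite over the polynomial ring $S^{p^{e_0}}$ and $\rank_B S=\dim_k k[N]$; this sandwich controls the generator counts $\mu_S({}^e(-))$ over the non-regular ring $B$ in place of the equality $\eHK^S=\rank_S$ available over the polynomial ring. Transporting the splitting and counting estimates of Proposition~\ref{etale-case2.prop} through $(-)^N$ then yields $\FL(L^N)=\dfrac{\ell}{\dim_k k[G]}\,[(k[G]\otimes_k S)^N]$ in $\Theta^\circ(H,B)$, the denominator $\dim_k k[G]=\dim_k k[N]\cdot\dim_k k[H]$ being exactly the $B$-rank of the transported injective object.

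Applying $[(-)^H]$ gives $\FL(L^G)=\dfrac{\ell}{\dim_k k[G]}\,[((k[G]\otimes_k S)^N)^H]$, and $((k[G]\otimes_k S)^N)^H=(k[G]\otimes_k S)^G=S''$, so the first equality of the theorem follows. For the explicit expansion, note that since $k[N]^*$ is local, $N$ acts trivially on every simple $G$-module, whence the simple $G$-modules are the inflations $V_1,\dots,V_r$ of the simple $H$-modules and $\dim\End_G V_i=\dim\End_H V_i$. Using $k[G_r]\cong k[H_r]'\otimes k[N_r]$ together with the isomorphism $\Box$ of Section~2, one matches the $G$-projective cover $P_i$ with the $H$-projective cover and identifies the summands $(P_i\otimes_k S)^G$, turning the basis expansion of $[S'']$ into $\sum_{i=1}^r\frac{\dim V_i}{\dim\End_G V_i}[(P_i\otimes_k S)^G]$, which is the second equality.

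The main obstacle is the computation in the second paragraph. Since there is no canonical $(G,S)$-structure on ${}^eM$, one cannot simply replace $k[H]$ by $k[G]$ in Proposition~\ref{etale-case2.prop}: the infinitesimal direction must be absorbed into the base $B=S^N$, over which ${}^e(-)$ is defined only $H$-equivariantly and over which $B$ is no longer regular. Showing that ${}^e(L^N)$ becomes asymptotically a sum of copies of the transported injective object $(k[G]\otimes_k S)^N$, with the predicted multiplicity, using only the Frobenius sandwich $S^{p^{e_0}}\subset B\subset S$ of Lemma~\ref{infinitesimal-invariant-finite.lem} and the rank-preserving equivalence of Lemma~\ref{almost-Grothendieck-theorem.lem}, is the technical heart of the argument; it is here that the factor $\dim_k k[N]$ in the denominator is produced, the remaining factor $\dim_k k[H]$ coming from the \'etale estimates inherited from Proposition~\ref{etale-case2.prop}.
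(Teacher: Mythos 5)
Your overall architecture (split $G=N\rtimes H$ with $N=G^\circ$ infinitesimal and $H=G\red$ \'etale, pass through the almost-principal-bundle equivalence of Lemma~\ref{almost-Grothendieck-theorem.lem}, and feed the \'etale part to Proposition~\ref{etale-case2.prop}) matches the paper's, but at the decisive step you take a different turn and leave a genuine gap. You propose to compute $\FL(L^N)$ by ``running the Frobenius-counting argument of Proposition~\ref{etale-case2.prop} in the $H$-equivariant setting over $B=S^N$'' and by ``transporting the splitting and counting estimates through $(-)^N$,'' and you yourself flag this as the technical heart. That heart cannot be transplanted. First, there is nothing to transport: as the paper notes, ${}^eM$ carries no canonical $G$-module structure, so for $e\geq 1$ the modules ${}^e(L^N)$ are not of the form $(-)^N$ of anything in $\Ref(G,{}^eS)$, and the equivalence of Lemma~\ref{almost-Grothendieck-theorem.lem} gives you no hold on them. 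Second, every estimate in the proof of Proposition~\ref{etale-case2.prop} uses that the base is a polynomial ring: the identity $\eHK^S=\rank_S$, the injectivity of $k[G]\otimes_kS$ in the exact category of $S$-free objects of $\Cal C(G,S)$, and the reduction modulo $\fm=S_+$ to counting $G$-module summands of $k[G]^{\gamma_e}$. None of this is available over the non-regular ring $B$, and the sandwich $S^{p^{e_0}}\subset B\subset S$ by itself does not ``control the generator counts''; you would have to prove a full $B$-analogue of the splitting and counting scheme, which is essentially the theorem itself. Your displayed claim $\FL(L^N)=\frac{\ell}{\dim_kk[G]}[(k[G]\otimes_kS)^N]$ is a restatement of the conclusion, not a step toward it.

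The paper avoids all of this by a single Frobenius twist. Choosing $e_0$ with $S^{p^{e_0}}\subset B$ (Lemma~\ref{infinitesimal-invariant-finite.lem}) gives inclusions $B\subset S'\subset{}^{e_0}B$, where $S'$ is $S$ with the $G$-action inflated from $H$; hence $F:={}^{e_0}(L^N)$ is a reflexive graded $(H,S')$-submodule of a finite free module over the \emph{polynomial ring} $S'$, and Proposition~\ref{etale-case2.prop} applies to $F$ verbatim, over a regular base, with only the \'etale group acting. The infinitesimal direction is absorbed not into the base ring but into a tensor factor, via the isomorphism $({}^{e_0}(L^G)\otimes_AS)^{**}\cong F'\otimes_kk[N]$, which rests on smallness of the $N$-action in the form $(S'\otimes_BS)^{**}\cong S'\otimes_kk[N]$; the identification $(S'\otimes_kk[H]'\otimes_kk[N])^G\cong(S\otimes_kk[G])^G=S''$ is then Lemma~\ref{HNG.lem}. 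This twist-by-$e_0$ mechanism is exactly what is missing from your second paragraph, and without it (or a new counting argument over $B$ that you have not supplied) the proof is incomplete.
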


\paragraph
We set $N:=G^\circ$ and $H:=G/N\cong G\red$.
We define $\psi:G\rightarrow G$ by the composite $G\rightarrow G/N=H=G\red\hookrightarrow G$.
For a $G$-module $W$, we define the $G$-module $\res_\psi W$ by $W'$, see (\ref{W'.par}).

Note that the coordinate algebra $k[N]$ is a $G$-module, or a right $k[G]$-comodule by the
coaction $k[N]\rightarrow k[N]\otimes k[G]$ induced by the map
$k[G]\rightarrow k[N]\otimes k[G]$ given by $f\mapsto \sum_{(f)}n_{(2)}\otimes (\Cal S n_{(1)})\cdot n_{(3)}\in k[N]\otimes k[G]$
(this map is well-defined, since $N$ is a normal subgroup scheme of $G$).
It is easy to see that for a $G$-module $W$, the coproduct
$\omega_W: W\rightarrow W'\otimes k[N]$ is $G$-linear.

\paragraph
We set $B=S^N$ and $A=S^G=B^H$.
By Lemma~\ref{infinitesimal-invariant-finite.lem},
we have that there exists some $e_0\geq 1$ such that $S^{p^{e_0}}\subset B$.

\paragraph
As $G$ is the semidirect product $G=N\rtimes H$, the composite
\[
k[G]\xrightarrow{\Delta_G}k[G]\otimes k[G]
\xrightarrow{\pi_N\otimes \pi_H}
k[N]\otimes k[H]'
\]
is an isomorphism, where $\pi_N:k[G]\rightarrow k[N]$
and $\pi_H:k[G]\rightarrow k[H]'$ are the canonical maps
corresponding to the canonical closed immersion $N\hookrightarrow G$ and $H\hookrightarrow G$, respectively.
Note that $B\subset S'\subset {}^{e_0}B$, and that
\begin{multline*}
  ({}^{e_0}(L^G)\otimes_A S)^{**}
  \cong (({}^{e_0}(L^G)\otimes_A B)^{\star\star}\otimes_B S)^{**}
  \cong ({}^{e_0}(L^N)\otimes_B S)^{**}\\
\cong
({}^{e_0}(L^N)\otimes_{S'} (S' \otimes_B S)^{**})^{**}
\cong
({}^{e_0}(L^N)\otimes_{S'} S'\otimes_k k[N])^{**}\\
\cong
({}^{e_0}(L^N)\otimes_k k[N])^{**}
\cong
F'\otimes_k k[N],
\end{multline*}
where $(-)^*=\Hom_S(-,S)$, $(-)^\star=\Hom_B(-,B)$, and $F={}^{e_0}(L^N)$.
Note that $F\in\Ref(H,S')$ and it is a graded $(H,S')$-submodule $M'$.
As $M'$ is finite free as an $S'$-module, we have that 
\[
\FL(F)=\frac{1}{|H|}\frac{p^{de_0}\rank_S L}{|N|}[S'\otimes_k k[H]]
=\frac{p^{de_0}\rank_SL}{|G|}[S'\otimes_k k[H]]
\]
in $\Theta^\circ(H,S')$ by Proposition~\ref{etale-case2.prop}, where $|H|=\dim_k k[H]$,
$|N|=\dim_k k[N]$, and $|G|=\dim_k k[G]$.

Note that for a $\Bbb Q$-graded $(H,S)$-module $Q$, 
$Q'\otimes_k k[N]$ is a $(G,S)$-module with the $S$-action
\[
s\cdot(q\otimes h)=\sum_{(s)}s_{(1)}q\otimes s_{(0)}h.
\]

\begin{lemma}\label{HNG.lem}
  $S'\otimes k[H]'\otimes k[N] $ is isomorphic to $S\otimes k[G]$ as a $(G,S)$-module.
\end{lemma}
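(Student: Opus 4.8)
The plan is to construct an explicit $(G,S)$-module isomorphism
\[
\Phi: S'\otimes_k k[H]'\otimes_k k[N]\longrightarrow S\otimes_k k[G]
\]
by combining two ingredients already prepared in the preliminaries. First I would recall that $G=N\rtimes H$ gives the isomorphism $k[G]\cong k[N]\otimes_k k[H]'$ of $G$-modules coming from the semidirect product decomposition (via $\pi_N\otimes\pi_H$ applied to the comultiplication), so that the target $S\otimes_k k[G]$ is identified with $S\otimes_k k[N]\otimes_k k[H]'$ as a $(G,S)$-module. The task then reduces to producing an isomorphism
\[
S'\otimes_k k[H]'\otimes_k k[N]\;\cong\; S\otimes_k k[N]\otimes_k k[H]'.
\]
Since the tensor factor $k[H]'$ carries only an $H$-action (its $N$-action is trivial after restriction along $\res_\psi$) and is manifestly common to both sides, the crux is an isomorphism $S'\otimes_k k[N]\cong S\otimes_k k[N]$ of $(G,S)$-modules, with the $k[H]'$ factor carried along formally.

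The key step is therefore to invoke the isomorphism $\Box$ constructed in the earlier unnumbered lemma: for a $G$-module $W$, the map $\Box: W\otimes k[N_r]\to W'\otimes k[N_r]$, $w\otimes\alpha\mapsto\sum_{(w)}w_{(0)}\otimes w_{(1)}\alpha$, is an isomorphism of $(G,k[N_r])$-modules. Taking $W=S$ and reading $k[N]$ as $k[N_r]$, this yields $S\otimes_k k[N]\cong S'\otimes_k k[N]$ compatibly with the $S$-action twisted by the coaction, which is exactly the $(G,S)$-module structure $s\cdot(q\otimes h)=\sum_{(s)}s_{(1)}q\otimes s_{(0)}h$ recorded just before the lemma statement. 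I would check that the $S$-module structure transported through $\Box$ matches this twisted structure, which is a direct comodule-algebra computation analogous to the one verifying $k[N_r]$-linearity of $\Box$ above.

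The main obstacle I anticipate is bookkeeping the various module structures correctly: on the source $S'\otimes k[H]'\otimes k[N]$ the first factor is the Frobenius-twisted restriction $S'=\res_\psi S$ (trivial $N$-action), whereas on the target $S$ carries its genuine $G$-action, and the isomorphism $\Box$ is precisely the device that intertwines these two via the coproduct $\omega_S:S\to S'\otimes k[N]$. I must keep straight that $\Box$ converts the untwisted tensor product $S\otimes k[N]$ into the twisted one $S'\otimes k[N]$, and that the $k[H]'$ factor is inert under this map because $\Box$ is built from the $N$-coaction alone. Once the matching of $S$-actions is verified, $H$-linearity and $N$-linearity of the composite follow from the corresponding properties of $\Box$ and of the semidirect-product identification of $k[G]$, and the composite $\Phi$ is then the desired $(G,S)$-isomorphism.
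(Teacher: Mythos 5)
Your proposal is correct and follows essentially the same route as the paper: an explicit isomorphism assembled from the semidirect-product decomposition $k[G_r]\cong k[H_r]'\otimes k[N_r]$ and the coaction-untwisting maps $\Box$, with the required $S$-linearity reducing to the multiplicativity of the coaction $\omega_S$, exactly the check you defer. The only difference is the order of operations: the paper first untwists the full $G$-action, passing through $S''\otimes k[G]$ with $S''$ carrying the trivial action, and then re-twists the $H$-part via $\Box_H^{-1}$, whereas you factor $k[G]$ first and untwist only the $N$-part by a single application of the unnumbered $\Box$-lemma --- a cosmetic reordering producing the same isomorphism.
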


\begin{proof}
  The maps in the sequence
  \[
  S\otimes k[G]\xrightarrow \Box S''\otimes k[G]\xrightarrow{\delta}S''\otimes k[H]'\otimes k[N]
  \xrightarrow{\Box_H^{-1}}S'\otimes k[H]'\otimes k[N]
\]
  are all isomorphisms, where
  $\Box(s\otimes f)=\sum_{(s)}s_{(0)}\otimes s_{(1)}f$,
  $\delta(s\otimes f)=\sum_{(f)}s\otimes \pi_H(f_{(1)})\otimes \pi_N(f_{(2)})$,
  and
  $\Box_H^{-1}(s\otimes h)=\sum_{(s)}s_{(0)}\otimes\Cal S_H(s_{(1)})h$,
where $\pi_H:k[G]\rightarrow k[H]$ and $\pi_N:k[G]\rightarrow k[N]$ are the canonical
surjective homomorphisms of $k$-Hopf algebras associated with the inclusions $H\hookrightarrow G$ and
$N\hookrightarrow G$, respectively.
These maps are isomorphisms of $G$-modules and isomorphisms of $k$-algebras.
Thus by $s(s'\otimes f)=ss'\otimes f$, $s(s''\otimes f)=\sum_{(s)}s_{(0)}s''\otimes s_{(1)}f$,
$s(s''\otimes h\otimes r)=\sum_{(s)}s_{(0)}s''\otimes s_{(1)}h\otimes s_{(2)}r$,
and
$s(s''\otimes h\otimes r)= s_{(0)}s''\otimes h\otimes s_{(1)}r$,
the $k$-algebras $S\otimes k[G]$, $S''\otimes k[G]$, $S''\otimes k[H]'\otimes k[N]$,
and $S'\otimes k[H]'\otimes k[N]$ are $G$-algebras, and the maps $\Box$, $\delta$, and $\Box_H^{-1}$
above are all isomorphisms of $(G,S)$-modules.
\end{proof}

\paragraph
Note that the $(H,S')$-module $F={}^{e_0}(M^N)$ is reflexive and is an $(H,S')$-submodule of ${}^{e_0}M'$,
which is finite free as an $S'$-module.
By Proposition~\ref{etale-case2.prop}, we have that
\[
\FL(F)=\frac{\rank_{S'}F}{|H|}[S'\otimes k[H]]
=
\frac{p^{de_0}\rank_SL}{|G|}[S'\otimes k[H]]
\]
in $\Theta^\circ(H,S')$.
Hence
\begin{multline*}
  \FL(L^G)=\frac{1}{p^{de_0}}\FL({}^{e_0}(L^G))= \frac{1}{p^{de_0}}\FL( (({}^{e_0}(L^G)\otimes_A S)^{**})^G)\\
=
\frac{1}{p^{de_0}}\FL((F'\otimes_k k[N])^G)  \\
=
\frac{\rank_SL}{|G|}[(S'\otimes_k k[H]'\otimes_k k[N])^G]
=\frac{\rank_SL}{|G|}[(S\otimes_k k[G])^G],
\end{multline*}
and Theorem~\ref{main.thm} has been proved.
\qed

\begin{corollary}\label{main.cor}
  Let the notation be as in Theorem~\ref{main.thm}.
  Then for every indecomposable $\Bbb Q$-graded finite $A$-module $N$,
  \[
  s(N,A)=\begin{cases}
  \dfrac{\dim V_i}{\dim k[G]\cdot \dim \End_G V_i} & (N\cong (P_i\otimes_k S)^G) \\
  0 & (\text{there is no \mt{i} such that $N\cong (P_i\otimes_k S)^G$})
  \end{cases}.
  \]
\end{corollary}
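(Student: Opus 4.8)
The plan is to read the corollary off Theorem~\ref{main.thm} by specializing to $L=M=S$ and then extracting coefficients. Taking $M=L=S$ (which is $S$-free of rank one, hence reflexive, and is trivially a graded $(G,S)$-submodule of itself, so that $\rank_S L=1$), Theorem~\ref{main.thm} gives
\[
\FL(A)=\FL(S^G)=\frac{1}{\dim_k k[G]}\sum_{i=1}^r\frac{\dim V_i}{\dim\End_G V_i}[(P_i\otimes_k S)^G]
\]
in $\Theta^\circ(A)$, where $A=S^G$. Since this Frobenius limit exists, the generalized $F$-signature of an indecomposable $\Bbb Q$-graded finite $A$-module $N$ is, by the very definition of $s(-,A)=s_{\{e\}}(-,-)$, equal to $s(N,A)=\summ(N,\FL(A))$, i.e.\ to the coefficient of $[N]$ in the expansion of $\FL(A)$ along the basis $\Ind^\circ(\Cal C(A))$. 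Thus the whole statement reduces to showing that the $r$ summands $[(P_i\otimes_k S)^G]$ are pairwise distinct basis elements.

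The heart of the argument, and the step I expect to require the most care, is to show that the modules $(P_i\otimes_k S)^G$ for $i=1,\ldots,r$ are indecomposable in $\Cal C(A)$ and pairwise non-isomorphic up to grading shift. Here I would invoke the equivalence $(-)^G\colon\Cal R(G,S)\xrightarrow{\sim}\Cal R(A)$ (the graded reflexive version of Lemma~\ref{almost-Grothendieck-theorem.lem}), noting that $P_i\otimes_k S$ is $S$-free, hence reflexive, and that $(-)^G$ commutes with grading shifts since invariants are taken degree by degree; so it suffices to prove the corresponding statements for $P_i\otimes_k S$ in $\Cal R(G,S)$. For indecomposability, suppose $P_i\otimes_k S=X\oplus Y$ in $\Cal C(G,S)$; reducing modulo $\fm=S_+$ yields a decomposition $P_i=X/\fm X\oplus Y/\fm Y$ of $G$-modules concentrated in degree $0$, so by graded Nakayama both $X$ and $Y$ are generated in degree $0$, and the degree-$0$ part gives $P_i=X_0\oplus Y_0$ as $G$-modules. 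As $P_i$ is an indecomposable (projective) $G$-module, one summand, say $Y_0$, vanishes, whence $Y/\fm Y=Y_0=0$ and $Y=0$. For pairwise non-isomorphism, an isomorphism $P_i\otimes_k S\cong (P_j\otimes_k S)(c)$ must match lowest nonzero degrees, forcing $c=0$, and then match degree-$0$ parts, giving $P_i\cong P_j$ as $G$-modules; since the $P_i$ are the pairwise non-isomorphic projective covers of the distinct simples $V_i$, this forces $i=j$.

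With this in hand the conclusion is immediate bookkeeping: the $r$ classes $[(P_i\otimes_k S)^G]$ are distinct elements of $\Ind^\circ(\Cal C(A))$, so the coefficient of an indecomposable $N$ in $\FL(A)$ equals $\dfrac{\dim V_i}{\dim k[G]\cdot\dim\End_G V_i}$ when $N\cong(P_i\otimes_k S)^G$ up to shift for some (then unique) $i$, and equals $0$ when $N$ is of none of these forms. Combined with $s(N,A)=\summ(N,\FL(A))$, this is precisely the asserted case distinction. The only genuine obstacle is the indecomposability-and-distinctness step above; everything else follows once Theorem~\ref{main.thm} is available.
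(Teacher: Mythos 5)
Your proof is correct and follows exactly the route the paper intends: the paper states Corollary~\ref{main.cor} with no written proof, treating it as immediate coefficient extraction from Theorem~\ref{main.thm} via $s(N,A)=\summ(N,\FL(A))$. The one step you supply that the paper leaves implicit --- that the $(P_i\otimes_k S)^G$ are indecomposable and pairwise non-isomorphic up to shift, via the equivalence $(-)^G$ on reflexive modules together with graded Nakayama and the indecomposability of the projective covers $P_i$ --- is exactly the right justification and is carried out correctly.
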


The following corollaries for the case that $G$ is linearly reductive was proved by
Watanabe--Yoshida \cite{WY}, Carvajal-Rojas--Schwede--Tucker \cite{CST}, and Carvajal-Rojas \cite{Carvajal-Rojas}.
For some important cases that $G$ is not linearly reductive was proved by 
Broer \cite{Broer}, Yasuda \cite{Yasuda},
and Liedtke--Martin--Matsumoto \cite{LMM}.

\begin{corollary}\label{main-cor2.cor}
  Let \mt{k} be a perfect field, \mt{V} a finite-dimensional \mt{k}-vector space,
  and \mt{G\subset\GL(V)} be a small finite subgroup scheme.
  Let \mt{S:=\Sym V}, and \mt{A:=S^G}.
  Then we have 
  \[
  s(A):=s(A,A)=
  \begin{cases}
  \dfrac{1}{\dim k[G]} & (\text{$G$ is linearly reductive}) \\
  0 & (otherwise)
  \end{cases}.
  \]
\end{corollary}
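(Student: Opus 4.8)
The plan is to read this off from Corollary~\ref{main.cor} by taking $N=A$. First I would record that $A=S^G$, viewed as a graded module over itself, is indecomposable: its ring of degree-preserving graded endomorphisms is $A_0=k$, a field, hence has no nontrivial idempotents. So Corollary~\ref{main.cor} applies with $N=A$, and it tells us that $s(A)=s(A,A)$ equals $\frac{\dim V_i}{\dim k[G]\cdot\dim\End_G V_i}$ when $A\cong (P_i\otimes_k S)^G$ for some $i$, and equals $0$ otherwise. Thus the entire statement reduces to deciding \emph{when} $A$ is isomorphic (up to grading shift) to one of the modules $(P_i\otimes_k S)^G$.

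Next I would transport this isomorphism question across the equivalence $\Cal R(G,S)\simeq\Cal R(A)$, $M\mapsto M^G$ (the reflexive-module equivalence established above, resting on Lemma~\ref{almost-Grothendieck-theorem.lem} and the smallness of the action on the normal domain $S$). Under it $A=S^G$ corresponds to $S$, while $(P_i\otimes_k S)^G$ corresponds to $P_i\otimes_k S$; both $S$ and $P_i\otimes_k S$ are $S$-free, hence reflexive, $(G,S)$-modules. Therefore, up to shift, $A\cong (P_i\otimes_k S)^G$ if and only if $S\cong P_i\otimes_k S$ as graded $(G,S)$-modules. Comparing $S$-ranks forces $\dim_k P_i=1$ and pins the shift to $0$, and then reducing modulo the $G$-stable irrelevant ideal $\fm=S_+$ yields, using that $G$ acts trivially on $S/\fm=k=V_1$, the $G$-module identity $V_1=S/\fm\cong (P_i\otimes_k S)\otimes_S(S/\fm)\cong P_i$. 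Hence $A\cong (P_i\otimes_k S)^G$ for some $i$ exactly when the trivial $G$-module $k$ is projective, i.e.\ $P_1=V_1=k$, in which case $i=1$.

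It then remains to identify projectivity of the trivial module with linear reductivity. Here I would invoke the Maschke-type theorem for finite-dimensional Hopf algebras: the trivial $k[G]^*$-module $k$ is projective if and only if $k[G]^*$ is semisimple, which is precisely the condition that $G$ be linearly reductive. Feeding this back: if $G$ is linearly reductive then $i=1$, $\dim_k V_1=1$ and $\dim_k\End_G V_1=\dim_k\End_G k=1$, so Corollary~\ref{main.cor} gives $s(A)=1/\dim k[G]$; if $G$ is not linearly reductive then no $P_i$ is isomorphic to $k$, so $A$ is not isomorphic to any $(P_i\otimes_k S)^G$ and $s(A)=0$.

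I expect the genuine content — and the one step warranting care — to be the reduction argument of the second paragraph: checking that the a priori only-up-to-shift graded $(G,S)$-isomorphism $S\cong P_i\otimes_k S$ descends, via $-\otimes_S(S/\fm)$ along the $G$-stable ideal $\fm$, to the honest $G$-module isomorphism $P_i\cong k$, together with the bookkeeping that $M\mapsto M^G$ really carries $S$ to $A$ and $P_i\otimes_k S$ to $(P_i\otimes_k S)^G$ within the category of $\Bbb Q$-graded reflexive modules. Everything else is a formal consequence of Corollary~\ref{main.cor} and the standard Hopf-algebraic fact quoted above.
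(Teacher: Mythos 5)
Your proposal is correct and follows essentially the same route as the paper's own proof: reduce to Corollary~\ref{main.cor} with $N=A$, and observe that $A\cong(P_i\otimes_kS)^G$ up to shift iff $S\cong P_i\otimes_kS$ iff $k\cong P_i$, which happens for some $i$ exactly when $G$ is linearly reductive. You merely supply more detail (the reflexive-module equivalence, the reduction modulo $\fm$, and the Hopf-algebraic Maschke theorem) than the paper's terse two-line argument.
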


\begin{proof}
  Note that $A=S^G\cong (P_i\otimes_k S)^G$ up to degree shifting if and only if
  $S\cong P_i\otimes_k S$ up to degree shifting if and only if $k\cong P_i$ as $G$-modules.
  Considering the case that $N=A$ in Corollary~\ref{main.cor}, the corollary follows,
  since $G$ is linearly reductive if and only if $k\cong P_i$ for some $i$.
\end{proof}

\begin{corollary}\label{main-cor3.cor}
Let the notation be as in Corollary~\ref{main-cor2.cor}.
  The following are equivalent.
  \begin{enumerate}
  \item[\rm(1)] $G$ is linearly reductive.
  \item[\rm(2)]  Both $G^\circ$ and $G\red$ are linearly reductive.
  \item[\rm(3)] $\bar G^\circ\cong \mu_{p^{e_1}}\times\cdots\times\mu_{p^{e_r}}$ for some
    $e_1,\ldots,e_r\geq 1$, and
    $|\bar G\red|$ is not divisible by $p$, where $\bar G=\bar k\otimes_k G$ is the
    base change of $G$ to the algebraic closure $\bar k$ of $k$.
  \item[\rm(4)] $S^G$ is a direct summand subring of $S$.
  \item[\rm(5)] $S^G$ is strongly $F$-regular.
  \item[\rm(6)] $S^G$ is $F$-regular.
  \item[\rm(7)] $S^G$ is weakly $F$-regular.
  \item[\rm(8)] The $F$-signature $s(S^G)$ is positive.
  \end{enumerate}
\end{corollary}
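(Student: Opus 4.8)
The plan is to make linear reductivity of $G$, condition~(1), the hub through which all the other conditions are connected, exploiting the explicit value of the $F$-signature supplied by Corollary~\ref{main-cor2.cor}. I would organize the proof into three blocks: the purely group-scheme-theoretic equivalences $(1)\iff(2)\iff(3)$; the ``easy'' implications relating (1) to the direct-summand and strong-$F$-regularity conditions; and the one genuinely nontrivial step that closes the cycle among the three $F$-regularity notions.

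For $(1)\iff(2)\iff(3)$ I would appeal to Nagata's classification of linearly reductive finite group schemes. Since linear reductivity is preserved and reflected by the faithfully flat base change $k\to\bar k$, we may test it over $\bar k$. Nagata's theorem says that $\bar G$ is linearly reductive precisely when its infinitesimal part $\bar G^\circ$ is of multiplicative type, i.e.\ $\bar G^\circ\cong\mu_{p^{e_1}}\times\cdots\times\mu_{p^{e_r}}$, and the order of the \'etale quotient $\bar G\red\cong\pi_0(\bar G)$ is prime to $p$, the latter being Maschke's theorem for the \'etale part. Applying the criterion to $G$ itself gives $(1)\iff(3)$, and applying it separately to the infinitesimal factor $G^\circ$ and to the \'etale factor $G\red$ (where linear reductivity reduces to $p\nmid|G\red|$) gives $(2)\iff(3)$.

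Next come the spokes linking (1) to the singularity conditions. For $(1)\Rightarrow(4)$, linear reductivity makes the invariant functor $(-)^G$ exact, so the inclusion $A=S^G\hookrightarrow S$ admits an $A$-linear Reynolds retraction, exhibiting $A$ as a direct summand subring of $S$. For $(4)\Rightarrow(5)$ I would use that $S=\Sym V$ is a polynomial ring, hence regular and strongly $F$-regular, together with the fact that a direct summand of a strongly $F$-regular ring is strongly $F$-regular. The implications $(5)\Rightarrow(6)\Rightarrow(7)$ are the standard ones among strong, plain, and weak $F$-regularity. The equivalence $(5)\iff(8)$ is the Aberbach--Leuschke criterion \cite{AL} that an $F$-finite ring is strongly $F$-regular iff its $F$-signature is positive, and $(8)\iff(1)$ is exactly Corollary~\ref{main-cor2.cor}, which gives $s(A)=1/\dim_k k[G]$ (hence positive) in the linearly reductive case and $s(A)=0$ otherwise. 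At this stage $(1),(2),(3),(4),(5),(8)$ are all equivalent, and each of them implies $(6)$ and $(7)$.

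The step that does the real work is the return implication $(7)\Rightarrow(5)$. The obstruction is that Corollary~\ref{main-cor2.cor} only tells us that a non-linearly-reductive $G$ forces $s(A)=0$, hence $A$ \emph{not} strongly $F$-regular; a priori this is weaker than $A$ being not weakly $F$-regular, so the naive contrapositive does not close the loop. To bridge exactly this gap I would invoke that $A=S^G$ is a finitely generated $\mathbb{N}$-graded $k$-algebra with $A_0=(S_0)^G=k$, and apply the theorem of Lyubeznik--Smith that weak and strong $F$-regularity coincide for such graded rings. This collapses $(5)$, $(6)$, and $(7)$ into a single class and completes the cycle. I expect this graded-equivalence input to be the crux: the $F$-signature computation separates strong $F$-regularity from its failure very cleanly, but promoting ``not strongly $F$-regular'' to ``not weakly $F$-regular'' is precisely the point where a nontrivial external theorem is unavoidable.
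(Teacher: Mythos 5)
Your proposal is correct, and most of it runs parallel to the paper: the equivalences $(1)\Leftrightarrow(2)\Leftrightarrow(3)$ via the classification of linearly reductive finite group schemes (the paper cites Sweedler's theorem for the connected part and Maschke for the \'etale part, which is the same content as the Nagata-type criterion you invoke), the Reynolds-operator argument for $(1)\Rightarrow(4)$, the direct-summand argument for $(4)\Rightarrow(5)$, the standard chain $(5)\Rightarrow(6)\Rightarrow(7)$, Aberbach--Leuschke for $(5)\Leftrightarrow(8)$, and Corollary~\ref{main-cor2.cor} for $(8)\Rightarrow(1)$. The one place you genuinely diverge is the step that closes the cycle. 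You correctly identify it as the crux --- promoting ``not strongly $F$-regular'' to ``not weakly $F$-regular'' --- and you handle it by proving $(7)\Rightarrow(5)$ via the Lyubeznik--Smith theorem that weak and strong $F$-regularity coincide for $\mathbb{N}$-graded rings finitely generated over a field $A_0=k$ (which applies, since $A=S^G$ is such a ring by Lemma~\ref{invariant-finite.lem}). The paper instead proves $(7)\Rightarrow(4)$ directly: following Smith's argument one has $IA^+\cap A\subset I^*$ for the absolute integral closure $A^+$, so weak $F$-regularity forces $A\hookrightarrow A^+$, hence $A\hookrightarrow S$, to be cyclically pure; normality of $A$ upgrades this to purity by Hochster, and Hochster--Roberts then gives the splitting. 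Your route is shorter and outsources the difficulty to a single powerful external theorem; the paper's route stays within the classical tight-closure/purity toolkit and lands back at condition~$(4)$ rather than $(5)$. Both are logically complete.
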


\begin{proof}
  For the proof of (1)$\Leftrightarrow$(2)$\Leftrightarrow$(3), we may
  assume that $k$ is an algebraically closed field.
  
  For the equivalence (1)$\Leftrightarrow$(2), see \cite[Lemma~2.2]{Hashimoto2}.

  We prove (2)$\Rightarrow$(3).
  As $G^\circ$ is finite, connected and linearly reductive,
  we have $G^\circ\cong \mu_{p^{e_1}}\times\cdots\times\mu_{p^{e_r}}$ for some $e_1,\ldots,e_r\geq 1$ by
  \cite[Theorem~4.1]{Sweedler2}.
  Moreover, as $G\red$ is linearly reductive, $|G\red|$ is not divisible by $p$ by Maschke's
  theorem.

  (3)$\Rightarrow$(2).
  As $G^\circ=\mu_{p^{e_1}}\times\cdots\times\mu_{p^{e_r}}=\Spec k\Lambda$, where
  $\Lambda$ is the abelian group $\Bbb Z/p^{e_1}\Bbb Z\times\cdots\times \Bbb Z/p^{e_r}\Bbb Z$,
  the category of $G^\circ$-modules is equivalent to the category of $\Lambda$-graded $k$-vector spaces
  \cite[(I.2.11)]{Jantzen}, and hence $G^\circ$ is linearly reductive.
  On the other hand, $G\red$ is linearly reductive by Maschke's theorem.

  We prove (1)$\Rightarrow$(4).
  For a $G$-module $V$, let $U(V)=\sum_{S\subset V}S$, where the sum is taken over all the non-trivial simple $G$-submodules of $V$.
  Then we have a functorial decomposition $V=V^G\oplus U(V)$.
  The projection $V\rightarrow V^G$ is a $G$-linear splitting of the inclusion $V^G\rightarrow V$,
  and it is a Reynolds operator.
  So $S^G$ is a direct summand subring of $S$.

  (4)$\Rightarrow$(5).
  As $S$ is a polynomial ring over $k$, it is strongly $F$-regular.
  Hence its direct summand $S^G$ is also strongly $F$-regular, see \cite[Lemma~3.17]{Hashimoto}.

  For (5)$\Rightarrow$(6), see \cite[Corollary~3.7]{Hashimoto}.

  (6)$\Rightarrow$(7) is obvious by the definitions of the $F$-regularity and the weak $F$-regularity,
  see \cite[(4.5)]{HH}.

  (7)$\Rightarrow$(4).
  Let $A=S^G$, and $A^+$ be the integral closure of $A$ in the algebraic closure $K$ of the field of fractions $Q(A)$ of $A$.
  Then as in the proof of \cite[Proposition~2.14]{Smith}, we have that $IA^+\cap A\subset I^*$,
  where $I^*$ denotes the tight closure.
  By the definition of weak $F$-regularity, we have that $IA^+\cap A=I$.
  As $I$ is arbitrary, we have that $A\hookrightarrow A^+$ is cyclically pure.
  In particular, $A\hookrightarrow S$ is also cyclically pure.
  As $A$ is normal, we have that $A\rightarrow S$ is pure by \cite{Hochster}.
  By \cite[Corollary~5.3]{HR}, $A$ is a direct summand subring of $S$.

  The equivalence (5)$\Leftrightarrow$(8) is well-known,
  see \cite[Theorem~0.2]{AL}.

  (8)$\Rightarrow$(1) follows immediately from Corollary~\ref{main-cor2.cor}.
\end{proof}

\begin{flushleft}
Mitsuyasu HASHIMOTO\\
Department of Mathematics\\
Osaka Metropolitan University\\
Sumiyoshi-ku, Osaka 558--8585, JAPAN\\
e-mail: {\tt mh7@omu.ac.jp}
\end{flushleft}

\begin{flushleft}
Fumiya KOBAYASHI\\
Department of Mathematics\\
Osaka City University\\
Sumiyoshi-ku, Osaka 558--8585, JAPAN\\
e-mail: {\tt fullhouse235711@gmail.com}
\end{flushleft}

\end{document}